\def\R{\mathbb{R}}
\def\Rinf{\R\cup \{+\infty\}}
\def\cA{\mathcal{A}}
\def\cF{\mathcal{F}}
\def\cI{\mathcal{I}}
\def\cO{\mathcal{O}}
\def\cV{\mathcal{V}}
\def\a{\alpha}
\def\k{\kappa}
\def\l{\lambda}
\def\veps{\varepsilon}
\def\wto{\rightharpoonup}
\newcommand{\dv}[1]{\,{\mathrm d}#1}
\newcommand{\wcheck}[1]{#1\hspace{-.8ex}\mbox{\huge {\lower.45ex \hbox{$\textstyle \check{}$}}} \hspace{.5ex}}
\let\oldmarginpar\marginpar
\renewcommand\marginpar[1]{
  \oldmarginpar[\raggedleft\footnotesize #1]
  {\raggedright\footnotesize #1}}
\newtheorem{definition}{Definition}
\newtheorem{proposition}[definition]{Proposition}
\newtheorem{corollary}[definition]{Corollary}
\newtheorem{remark}[definition]{Remark}
\numberwithin{definition}{section}
\definecolor{modmag}{RGB}{179,0,229}
\renewcommand{\text}{\textnormal}
\newtheorem{conjecture}[definition]{Conjecture}
\def\knotevolve{\textsc{Knotevolve}\ }
\begin{document}
\title{Computing confined elasticae}
\author[S. Bartels]{S\"oren Bartels}
\author[P. Weyer]{Pascal Weyer}
\email{bartels@mathematik.uni-freiburg.de}
\address{Abteilung f\"ur Angewandte Mathematik,  
Albert-Ludwigs-Universit\"at Freiburg, Hermann-Herder-Str.~10, 
79104 Freiburg i.~Br., Germany}
\date{\today}
\renewcommand{\subjclassname}{
\textup{2010} Mathematics Subject Classification}
\subjclass[2010]{65N30 (35Q74 65N12 74K10)}
\begin{abstract}
A numerical scheme for computing arc-length parametrized curves 
of low bending energy that are confined to convex domains 
is devised. The convergence of the discrete formulations to a continuous
model and the unconditional stability of an iterative scheme are addressed.
Numerical simulations confirm the theoretical results and lead to a 
classification of observed optimal curves within spheres. 
\end{abstract}
\keywords{rods, elasticity, constraints, numerical scheme}

\maketitle

\section{Introduction} 
Equilibrium configurations of thin elastic rods have been of interest since the times of Euler. 
The mathematical modelling of these deformable structures has been reduced 
from three dimensions to a one-dimensional problem for the center-line of the rod $u:I\to\R^3$,
cf.~\cite{Kirchhoff1859,Green1967,Dill1992,Mora2003,Bartels2019}. In the bending regime, 
the rod is inextensible so that $|u'|=1$ holds on $I$. Considering a circular cross section
and omitting twist contributions, the elastic energy reduces to the functional 
\begin{equation*}
E_\text{bend}[u]=\frac\kappa2\int_I|u''(x)|^2\dv{x}
\end{equation*}
for a parameter $\kappa>0$ that describes the bending rigidity.  Elasticae, i.e. rods of minimal 
bending energy, can be stated explicitly e.g. for periodic boundary conditions 
\cite{Langer1984,Langer1985}. Applications of elastic thin rods include DNA modelling 
\cite{Shi1994,Furrer2000,Balaeff2006}, the movement of actin filaments in cells 
\cite{Manhart2015} or of thin microswimmers \cite{Ranner2020}, the fabrication 
of textiles \cite{Grothaus2016}, and investigating the reach of a rod injected into a cylinder \cite{Reis16}. 

To obtain minimally bent elastic rods, the bending energy can be reduced by a gradient-flow 
approach. This method can be used for analytic considerations, cf. for instance 
\cite{Langer1985,Dziuk2002,Oelz2011,DallAcqua2014,Reiter21} and numerical computations 
\cite{Deckelnick2005,Barrett2008,Deckelnick2009,Barrett2010,Barrett2011,Bartels2013,Walker16,
Bartels2018,Barrett2019,BGN_JCP_19,Doerfler2019,Bonito22}. An efficient finite-element approach with an accurate 
treatment of the inextensibility condition
can be used to find equilibria of free elastic rods \cite{Bartels2013} and 
self-avoiding rods \cite{BarRei21,Bartels2018}. It can also be generalized to include 
twist contributions defined via torsion quantities \cite{BarRei20}. We follow 
common conventions and refer to rods as {\em elastic curves} when twist contributions 
are omitted. 

In this manuscript, a generalization of the existing scheme to calculate elasticae 
of \emph{confined} elastic curves is proposed. Confinements of elastic structures 
arise on a variety of length scales, such as DNA plasmids or biopolymers inside a 
cell or chamber \cite{Choi2005,Ostermeir2010}. The boundary of closely packed elastic 
sheets or a wire in a container can be modelled as confined elastic rods in two dimensions 
\cite{Donato2003,Boue2006}. A planar setting has been assessed in terms of 
phase-field modelling \cite{Dondl2011,Wojtowytsch2018}; a numerical scheme for thick
elastic curves in containers is devised in \cite{Walker2021}.
	
We propose an approach that can be used for rods embedded in arbitrary dimensions confined 
to convex domains. For the mathematical modelling, we use a gradient flow to minimize the 
bending energy. The admissible rod configurations during the flow are restricted to a domain 
$D\subset\R^3$.

The task to unbend a rod inside $D$ can be translated to minimizing $E_\text{bend}$ among all 
\[
u\in \cA\cap D, \quad \mathcal{A}=\{v\in H^2(I;\R^3)\,: |v'|=1\text{ a.e.},\, L_\text{bc}(v)=\ell_\text{bc} \}.
\]
The bounded linear operator $L_\text{bc}:H^2(I;\R^3) \to \R^\ell$ realizes appropriate boundary 
conditions. We restrict our considerations to those subsets $D$ that can be written as
finite intersections of \emph{simple quadratic confinements} $D_r$, $r=1,2,\dots,R$, i.e., 
\[
D= \cap_{r=1}^R D_r, \quad D_r = \{y\in\R^3: |y|_{D_r}^2 = y \cdot G_{D_r}y\leq 1\}
\]
for symmetric positive semi-definite matrices $G_{D_r} \in \R^{3\times 3}$. We call the finite
intersection a \emph{composite quadratic confinement}.
For ease of presentation we often consider one set $D_r$ and then omit the index $r$.  
Some basic simple quadratic confinements are the ball with radius $R$ and $G_D=I_3/R^2$, 
the ellipsoid with radii $R_1,R_2,R_3$ and $(G_D)_{ij}=\delta_{ij}/R_i^2$, or the space between 
two parallel planes with distance $2R$ with normal vector $n$ and $G_D=nn^t/R^2$. Boxes and finite 
cylinders can be constructed as composite quadratic confinements. In general, any simple or 
composite quadratic confinement is a convex, closed, and connected set. 

We enforce the confinement via a potential approach, so a non-negative term is added to 
the bending energy whenever the curve violates the confining restrictions. We define 
a potential $V_D:\R^3\to\R$ for a simple quadratic confinement $D$ that vanishes in $D$ 
and is strictly positive on $\R^3\backslash D$ via
\begin{equation*}
V_D(y)=\frac12\left(|y|_D-1\right)_+^2= \frac12 |y|_D^2 + \frac12 V_D^\text{cv}(y),
\end{equation*}
where the concave part $V_D^\text{cv}$ is given by the continuous function
\begin{equation*}
V_D^\text{cv}(y)=\begin{cases}
-|y|_D^2,&\text{ if $y\in D$,}\\
-2|y|_D+1&\text{ else.}
\end{cases}.
\end{equation*}
The potential is used to define a penalizing \emph{confinement energy} functional
\begin{equation*}
E_D[u]=\int_I|u(x)|_D^2+V_D^\text{cv}(u(x))\dv{x},
\end{equation*}
which is by the definition of the potential non-negative and zero if and only if 
the curve entirely lies within $D$. 
For a composite confinement defined via a family $(D_r)_{r=1,\dots,R}$ of simple 
quadratic confinements we sum the corresponding confinement energies up, i.e.,
\begin{equation}
E_D[u]=\sum_{r=1}^R E_{D_r}[u],\quad V_D(y)=\sum_{r=1}^R V_{D_r}(y).\label{eq:composite-energies}
\end{equation}
We remark that translated domains and half-spaces, e.g., $D = \{y\in \R^3: |y-y_D|_D^2 \le 1\}$ and
$D = \{y\in \R^3: a_D\cdot y \le 1 \}$ can be similarly treated. 

Given $\veps>0$, a curve $u_\varepsilon\in \mathcal{A}$ is called a 
\emph{(approximately) confined elastica} if it is stationary for the functional
\[
E_\varepsilon[u]= E_\text{bend}[u] +\frac1{2\varepsilon}E_D[u]
\]
in the set $\mathcal{A}$. If $V_D(u_\varepsilon)=0$ almost everywhere on $I$, 
the rod is called \emph{exactly confined elastica.} The parameter $\varepsilon$ 
determines the steepness of the quadratic well potential and defines a length-scale 
for the penetration depth of the curve into the space outside of $D$.

Considering a simple quadratic confinement $D\subset\R^3$, we let  $V_D\in C^1(\R^3;\R)$ 
be the corresponding quadratic-well potential, and choose $\veps>0$. Trajectories 
$u\in H^1([0,T];L^2(I;\R^3))\cap L^\infty([0,T];\mathcal{A})$ 
are defined by gradient flow evolutions. In particular, for an inner product $(\cdot,\cdot)_\star$ 
on $L^2(I;\R^3)$ and an initial configuration $u(0,x)=u_0(x)$, we define the temporal 
evolution as the solution of the time-dependent nonlinear system of partial differential 
equations
\begin{equation}
\begin{split}
 (\partial_t u,v)_\star+\kappa(u'',v'')& +\varepsilon^{-1}(u,G_Dv) \\
&   = - (2\varepsilon)^{-1}(\nabla V_D^\text{cv}(u),v) -(\lambda u',v') \label{eq:gradient-flow}
\end{split}
\end{equation}
for test functions $v\in \mathcal{V}$ with a suitable set $\cV$ and all $t\in [0,T]$. 
The function $\lambda\in L^1([0,T]\times I)$ is a Lagrange multiplier associated with the
arc-length condition. Confined elasticae are stationary points for~\eqref{eq:gradient-flow}.

For time discretization, we use backward differential quotients. Let $\tau>0$ be the 
fixed time-step and let $k\geq0$ be a non-negative integer. We set $u^0:=u_0$ and 
define the time-step
\begin{equation*}
d_t u^{k+1}=\frac{u^{k+1}-u^k}{\tau}.
\end{equation*}
The gradient flow system is evaluated implicitly except for the concave confinement energy, 
which is handled explicitly due to its non-linearity and anti-monotonicity, and the Lagrange
multiplier term, which is treated semi-implicitly. We hence have
\begin{equation}\begin{split}
(d_tu^{k+1},v)_\star+\kappa([u^{k+1}]'',v'')& + \varepsilon^{-1}(u^{k+1},G_Dv)\\ 
& =  - (2\varepsilon)^{-1}(\nabla V_D^\text{cv}(u^k),v)- (\lambda^{k+1}[u^k]',v') \label{eq:discr_grad_flow}
\end{split}\end{equation}
for suitable test curves $v\in \mathcal{V}$. To ensure that the parametrization by arc-length 
is approximately preserved throughout the gradient flow, the constraint $|[u^k]'|^2 =1$ is linearized. 
This yields the first order orthogonality condition
\begin{equation}
[u^k]'\cdot[d_tu^{k+1}]'=0\text{ on $I$.}\label{eq:ortho-semidiscr}
\end{equation}
By imposing the same condition on test curves, i.e., 
\begin{equation}
[u^k]'\cdot v'=0 \text{ on $I$,}\label{eq:ortho-semidiscr-test}
\end{equation}
the Lagrange multiplier term disappears in~\eqref{eq:ortho-semidiscr}.
Given $u^0,u^1,\ldots,u^k\in H^2(I;\R^3)$, there are unique functions $d_tu^{k+1}\in H^2(I;\R^3)$ 
that solve the gradient flow equation~\eqref{eq:discr_grad_flow} with all $v$
satisfying~\eqref{eq:ortho-semidiscr-test} and $L_\text{bc}[v]=0$.
This is a direct consequence of the Lax-Milgram lemma.

For numerical computations, we subdivide $I$ into a partition $\mathcal{P}_h$ of maximal 
length $h$, which can be represented by the nodes $x_0<x_1<\ldots<x_N$. We use the space of 
piecewise cubic, globally continuously differentiable splines on $\mathcal{P}_h$ as a 
conforming subspace $V_h\subset H^2(I)$.  On an interval $[x_i,x_{i+1}]$, these functions 
are entirely defined by the values and the derivatives at the endpoints. We also employ
the space of piecewise linear, globally continuous finite element functions that are
determined by the nodal values and denote the set by $W_h$. The corresponding interpolation
operators are denoted as $\mathcal{I}_{3,h}$ and $\mathcal{I}_{1,h}$, respectively. We impose 
the orthogonality of $d_tu_h^{k+1}$ and $u_h^k$ only at the nodes. The confinement
quantities are evaluated by mass lumping, so only the values at the nodes are required
\begin{equation*}
(v,w)_h:=\int_I\mathcal{I}_{1,h}(v\cdot w)\dv{x}.
\end{equation*}
In the nodal points, the concavity of $V_D^\text{cv}$ is utilized to prove an energy 
monotonicity property.

The discrete admissible set is defined via
\[
\cA_h := \{u_h \in V_h^3: |u_h'(x_i)|^2 =1,\, i=0,1,\dots,N, \ L_\text{bc}[u_h] = \ell_\text{bc}\},
\]
and we write $u_h\in \cA_h \cap D$ if $u_h(x_i)\in D$ for $i=0,1,\dots,N$. The set
of test functions relative to $u_h$ is 
\[
\cF_h[u_h] := \{ v_h \in V_h^3: u_h'(x_i) \cdot v_h'(x_i) = 0, \, i=0,1,\dots,N, \ L_\text{bc}[v_h] = 0 \}.
\]
We thus obtain the following fully practical numerical scheme to compute
confined elasticae: Given $u_h^0 \in \cA_h$ define $u_h^1,\ldots,u_h^k\in V_h^3$ by calculating
$d_tu_h^{k+1}\in \cF_h[u_h^k]$ such that
\begin{equation}\begin{split}
(d_tu_h^{k+1},v_h)_\star+\kappa([u_h^k]''+\tau [d_tu_h^{k+1}]'',v_h'')
& +\varepsilon^{-1}(u_h^k+\tau d_tu_h^{k+1},G_Dv_h)_h \\
& = - (2\varepsilon)^{-1}(\nabla V_D^\text{cv}(u_h^k),v_h)_h \label{eq:fully_discr_grad_flow}
\end{split}\end{equation}
for all $v_h\in \cF_h[u_h^k]$.
	
The remainder of this paper is structured into a first part proving the convergence of 
the proposed numerical scheme and into a second part that presents results of numerical 
experiments and describes confined elasticae for closed rods in balls.
The numerical simulations were done in the web application \knotevolve \cite{BaFaWe20}
which is accessible at \url{aam.uni-freiburg.de/knotevolve}.

\section{Convergence results}
In this section, we provide convergence results following ideas from~\cite{BarPal20-pre}.
The first result establishes the unconditional variational convergence of the discrete minimization 
problems to the continuous one defining confined elasticae.
The following partial $\Gamma$ convergence result relies on a regularity condition and
is a consequence of conformity properties of the discrete model. 

\begin{proposition}
Define $E_h: H^2(I;\R^3) \to \Rinf$ via 
\[
E_{h,\veps}[u_h] = \frac{\k}{2} \int_I |u_h''|^2 \dv{x} + \frac{1}{2\veps} \int_I \cI_{1,h} V_D(u_h) \dv{x}
\]
if $u_h \in \cA_h$ and $E_h[u_h] = + \infty$ if $u_h\in H^2(I;\R^3)\setminus \cA_h$. Analogously,
let
\[
E_\text{bend}[u] = \frac{\k}{2} \int_I |u''|^2 \dv{x} 
\]
for $u\in \cA\cap D$ and $E_\text{bend}[u]=+\infty$ if $u\in H^2(I;\R^3) \setminus \cA$. \\
(i) For every sequence $(u_h)_{h>0} \subset H^2(I;\R^3)$ with weak limit $u \in H^2(I)$
we have $E_\text{bend}[u] \le \liminf_{(h,\veps)\to 0} E_{h,\veps}[u_h]$. \\
(ii) For every $u\in \cA\cap D$ with $u \in H^3(I;\R^3)$ there exists a sequence
$(u_h)_{h>0} \subset H^2(I;\R^3)$ such that
$\lim_{(h,\veps)\to 0} E_{h,\veps}[u_h] = E_\text{bend}[u]$.
\end{proposition}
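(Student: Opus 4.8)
The plan is to verify the two standard pieces of a $\Gamma$-convergence statement separately: the lower bound (i) and the recovery sequence (ii).

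For (i), I would first discard the trivial case $\liminf_{(h,\veps)\to0} E_{h,\veps}[u_h] = +\infty$ and pass to a subsequence (not relabelled) attaining a finite liminf; along it necessarily $u_h\in\cA_h$ and the energy is bounded by some constant $C$. The one-dimensional compact embedding $H^2(I)\hookrightarrow\hookrightarrow C^1(\overline I)$ upgrades the weak convergence $u_h\wto u$ to strong convergence $u_h\to u$ in $C^1$, so that $u_h'\to u'$ uniformly. I would then recover the pointwise arc-length identity from the nodal one: fixing $x\in I$ and choosing nodes $x_{i(h)}\to x$ (possible since the mesh size tends to zero), the triangle inequality together with $|u_h'(x_{i(h)})|=1$, the uniform convergence $u_h'\to u'$, and the continuity of $u'$ yields $|u'(x)|=1$; hence $|u'|=1$ everywhere. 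The boundary conditions pass to the limit because $L_{\text{bc}}$ is bounded and linear, so $u\in\cA$.

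To see that $u$ is confined, I would use that the penalty is controlled, $\int_I\cI_{1,h}V_D(u_h)\dv{x}\le 2C\veps\to0$, and that $\cI_{1,h}(V_D\circ u_h)\to V_D\circ u$ uniformly; the latter follows from the uniform convergence $V_D\circ u_h\to V_D\circ u$ (continuity of $V_D$ and $C^0$ convergence of $u_h$), the sup-norm stability of $\cI_{1,h}$, and the vanishing interpolation error for the continuous limit. Passing to the limit gives $\int_I V_D(u)\dv{x}=0$, and nonnegativity of $V_D$ forces $V_D(u)=0$, i.e. $u(x)\in D$ for all $x$. Thus $u\in\cA\cap D$, and the lower bound closes by weak lower semicontinuity of $v\mapsto\int_I|v''|^2\dv{x}$ combined with nonnegativity of the penalty.

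For (ii), I would take the cubic Hermite interpolant $u_h:=\cI_{3,h}u$ as the recovery sequence. Since $u\in H^3(I)\subset C^2(\overline I)$ with $|u'|=1$, and $\cI_{3,h}$ matches both the values and the first derivatives at the nodes, one has $(\cI_{3,h}u)'(x_i)=u'(x_i)$, hence $|u_h'(x_i)|=1$, while the endpoint data reproduced by the interpolant give $L_{\text{bc}}[u_h]=\ell_{\text{bc}}$; thus $u_h\in\cA_h$. The decisive observation is that the penalty vanishes \emph{identically}: because $u_h(x_i)=u(x_i)\in D$ at every node, all nodal values of $V_D\circ u_h$ are zero, so $\cI_{1,h}(V_D(u_h))\equiv0$ and $E_{h,\veps}[u_h]=\tfrac{\k}{2}\int_I|u_h''|^2\dv{x}$ for every $h$ and $\veps$. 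It then remains to invoke the interpolation estimate $\|u-\cI_{3,h}u\|_{H^2(I)}\le Ch\|u\|_{H^3(I)}\to0$, which gives $u_h\to u$ strongly in $H^2$ and hence $\int_I|u_h''|^2\dv{x}\to\int_I|u''|^2\dv{x}$, yielding $\lim_{(h,\veps)\to0}E_{h,\veps}[u_h]=E_{\text{bend}}[u]$. The main obstacle I anticipate lies in part (i), in converting the purely nodal constraints of $\cA_h$ into the almost-everywhere constraints defining $\cA\cap D$: everything hinges on the compact embedding into $C^1$ and on the density of the nodes as $h\to0$, and the confinement argument additionally needs the uniform convergence of the lumped potential $\cI_{1,h}(V_D\circ u_h)$ rather than mere weak convergence, which is where continuity of $V_D$ and stability of the piecewise linear interpolation are essential. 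Part (ii) is comparatively soft once one notices that nodal interpolation keeps the interpolant's nodal values inside $D$, making the penalty term vanish exactly.
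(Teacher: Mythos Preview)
Your proof is correct and follows essentially the same route as the paper: for (i) you reduce to the finite-energy case, pass the nodal arc-length and confinement constraints to the limit, and conclude by weak lower semicontinuity of the bending term; for (ii) you take the cubic Hermite interpolant as recovery sequence and exploit that the lumped penalty vanishes at the nodes. The only minor technical variation is in how the arc-length constraint is recovered in (i): the paper uses the interpolation-error estimate $\||u_h'|^2-1\|_{L^2(I)}\le ch\|u_h''\|_{L^2}\|u_h'\|_{L^\infty}$, whereas you argue via the compact embedding $H^2(I)\hookrightarrow\hookrightarrow C^1(\overline I)$ and density of the nodes---both are standard and equally valid.
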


\begin{proof}
Throughout this proof we write $h\to 0$ for a sequence $(h,\veps)\to 0$. \\
(i) We consider a sequence $(u_h)_{h>0} \subset H^2(I;\R^3)$ and a limit $u\in H^2(I;\R^3)$
with $u_h \wto u$ in $H^2(I;\R^3)$ as $h\to 0$. To show that 
$E[u]\le \liminf_{h\to 0} E_{h,\veps}[u_h]$ it suffices to consider the case that the bound
is finite. Since $\cI_{1,h} |u_h'|^2 = 1$ we find that
\[
\big\| |u_h'|^2 - 1\big\|_{L^2(I)} \le c h \big\| \big(|u_h'|^2\big)'\|_{L^2(I)}
\le c h \|u_h''\|_{L^2(I)} \|u_h'\|_{L^\infty(I)},
\]
which by embedding results 
implies that $|u'|^2 =1$ in $I$. Similarly, since $\|\cI_{1,h} V_D(u_h)\|_{L^1(I)} \to 0$ 
it follows that $u\in D$ in $I$. Since the bending energy is weakly lower semicontinuous
and the potential term non-negative, we deduce the asserted inequality.  \\
(ii) Given $u\in \cA \cap D$  such that $u\in H^3(I;\R^3)$ we define $u_h = \cI_{3,h}u$
and note that $u_h \to u$ in $H^2(I;\R^3)$ and $u_h(x_i) \in D$ as well as
$|u_h'(x_i)|=1$ for all $i=0,1,\dots,N$, in particular $u_h\in \cA_h$. 
This implies that $\lim_{h \to 0} E_{h,\veps}[u_h] = E_\text{bend}[u]$. 
\end{proof}

\begin{remark}
The regularity condition can be avoided if a density result for inextensible
confined curves in the spirit of~\cite{Horn21} is available. Alternatively, 
a standard regularization of a given curve $u\in \cA\cap D$ can be considered
following~\cite{BarRei20,Bonito22} which requires an appropriate scaling of the 
discretization and penalty parameters.
\end{remark}

The proposition implies the convergence of discrete (almost) minimizers provided
that the boundary conditions imply a coercivity proper and exact minimizers are regular.

\begin{corollary}
Assume that minimizers $u\in \cA\cap D$ for $E_\text{bend}$ satisfy $u\in H^3(I;\R^3)$,
and that there exists $c>0$ such that $\|v\|_{H^2(I)} \le c \|v''\|_{L^2(I)}$ for 
all $v\in H^2(I;\R^3)$ with $L_\text{bc}[v]=0$ or $D$ is bounded. 
Then sequences of discrete almost minimizers
for $E_{h,\veps}$ accumulate weakly in $H^2(I;\R^3)$ at minimizers for $E_\text{bend}$.
\end{corollary}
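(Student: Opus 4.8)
The plan is to deduce the statement from the standard pattern for convergence of minimizers under $\Gamma$-convergence: the $\liminf$ inequality of part~(i) and the recovery sequence of part~(ii) of the preceding proposition, combined with an equi-coercivity estimate that guarantees weak sequential compactness of families of almost minimizers. By an \emph{almost minimizer} I understand a family $(u_h)$ with $u_h \in \cA_h$ and $E_{h,\veps}[u_h] \le \inf_{v_h \in \cA_h} E_{h,\veps}[v_h] + \delta_h$ for some $\delta_h \to 0$ as $(h,\veps)\to 0$.

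First I would fix a minimizer $\hu \in \cA\cap D$ of $E_\text{bend}$, which by hypothesis satisfies $\hu \in H^3(I;\R^3)$, and write $m = E_\text{bend}[\hu] = \min_{\cA\cap D} E_\text{bend}$. Applying part~(ii) to $\hu$ yields a recovery sequence $\huh \in \cA_h$ with $E_{h,\veps}[\huh] \to m$. Since $\huh$ is admissible, $\inf_{\cA_h} E_{h,\veps} \le E_{h,\veps}[\huh]$, so that almost-minimality of $u_h$ gives the uniform energy bound
\[
E_{h,\veps}[u_h] \le E_{h,\veps}[\huh] + \delta_h \le C .
\]
In particular $\tfrac{\k}{2}\|u_h''\|_{L^2(I)}^2 \le C$, and the confinement penalty obeys $\int_I \cI_{1,h} V_D(u_h)\dv{x} \le 2C\veps$.

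The crux of the argument, and the step I expect to be the main obstacle, is to upgrade the uniform bound on $\|u_h''\|_{L^2(I)}$ to a uniform bound on $\|u_h\|_{H^2(I)}$, i.e. to establish equi-coercivity. Under the first hypothesis I would fix any $z \in H^2(I;\R^3)$ with $L_\text{bc}[z] = \ell_\text{bc}$; then $L_\text{bc}[u_h - z] = 0$, and the assumed inequality yields $\|u_h - z\|_{H^2(I)} \le c\|(u_h-z)''\|_{L^2(I)} \le c\big(\|u_h''\|_{L^2(I)} + \|z''\|_{L^2(I)}\big)$, whence $\|u_h\|_{H^2(I)}$ is bounded. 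Under the alternative hypothesis that $D$ is bounded I would argue differently, since the penalty controls the nodal confinement only in an averaged sense that degenerates as $\veps \to 0$: from $\cI_{1,h}|u_h'|^2 = 1$ and the $L^2$ bound on $u_h''$ one first obtains $\|u_h'\|_{L^\infty(I)} \le 1 + c\sqrt{h}\,\|u_h''\|_{L^2(I)} \le C$; next, averaging $\int_I \cI_{1,h} V_D(u_h)\dv{x} \le 2C\veps$ over the nodes produces at least one node $x_{i^*}$ with $V_D(u_h(x_{i^*})) \le 2C\veps/|I| \to 0$, so that by the quadratic growth of $V_D$ and the boundedness of $D$ the value $u_h(x_{i^*})$ stays in a fixed bounded set; integrating $u_h'$ outward from $x_{i^*}$ then bounds $\|u_h\|_{L^\infty(I)}$, and hence $\|u_h\|_{H^2(I)}$.

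With equi-coercivity in hand the conclusion is immediate. Any family of almost minimizers is bounded in $H^2(I;\R^3)$ and therefore admits a weakly convergent subsequence $u_h \wto u$. Part~(i), whose proof also shows that the constraints pass to the weak limit, then gives $u \in \cA \cap D$ together with $E_\text{bend}[u] \le \liminf_{(h,\veps)\to 0} E_{h,\veps}[u_h]$. Chaining this with the energy bound and the recovery sequence,
\[
m \le E_\text{bend}[u] \le \liminf_{(h,\veps)\to 0} E_{h,\veps}[u_h] \le \liminf_{(h,\veps)\to 0}\big(E_{h,\veps}[\huh] + \delta_h\big) = m,
\]
forces $E_\text{bend}[u] = m$, so that $u$ is a minimizer of $E_\text{bend}$. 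As this applies to every weakly convergent subsequence, all weak accumulation points of sequences of discrete almost minimizers are minimizers of $E_\text{bend}$, as claimed.
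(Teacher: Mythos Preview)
Your argument is correct and is exactly the standard $\Gamma$-convergence pattern the paper has in mind; the paper itself gives no proof for this corollary beyond the sentence preceding it, so your write-up is a faithful elaboration of what is left implicit there. The equi-coercivity step you flag as the crux is precisely the reason for the two alternative hypotheses in the statement, and both of your arguments for it are sound (in the bounded-$D$ case, note that you may simply take $z=\hu$ in the first case, and that the nodal averaging bound $\min_i V_D(u_h(x_i)) \le |I|^{-1}\int_I \cI_{1,h} V_D(u_h)\dv{x}$ is all that is needed).
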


Our second convergence result concerns an estimate on the confinement violation.

\begin{proposition}\label{prop:penetration}
Let $u_h \in \cA_h$. Then we have that
\[
\|\cI_{1,h} (|u_h|_D-1)_+\|_{L^\infty(I)} \le c \veps^{1/3} \big(E_{h,\veps}[u_h]\big)^{1/3}.
\]
\end{proposition}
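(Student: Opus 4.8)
The plan is to read off two quantities from the energy bound and then convert the resulting $L^2$-smallness of the penetration into an $L^\infty$-bound by exploiting that the nodal penetration profile cannot be arbitrarily steep. Writing $E := E_{h,\veps}[u_h]$ and recalling $V_D(y)=\tfrac12(|y|_D-1)_+^2$, nonnegativity of the two energy contributions gives at once
\[
\tfrac{\k}{2}\|u_h''\|_{L^2(I)}^2 \le E, \qquad \int_I \cI_{1,h}\big[(|u_h|_D-1)_+^2\big]\dv{x} \le 4\veps E,
\]
since the second integrand equals $2\,\cI_{1,h}V_D(u_h)$ and $\tfrac1{2\veps}\int_I \cI_{1,h}V_D(u_h)\le E$. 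I would then set $w := (|u_h|_D-1)_+$ and $g_h := \cI_{1,h}w$, so that $g_h$ is the continuous, piecewise affine interpolant of the nodal penetrations and $\|g_h\|_{L^\infty(I)} = \max_i w(x_i) =: M$ is attained at some node $x_{i^*}$. Because $t\mapsto t^2$ is convex and $g_h$ interpolates the same nonnegative nodal data whose squares appear in the bracket above, Jensen's inequality yields $g_h^2 \le \cI_{1,h}[w^2]$ pointwise, and therefore $\|g_h\|_{L^2(I)}^2 \le 4\veps E$.

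Next I would control the Lipschitz constant of $g_h$. On each element its slope is the difference quotient $(w(x_{i+1})-w(x_i))/h_i$; using that $t\mapsto (t-1)_+$ is $1$-Lipschitz, that $|\cdot|_D$ is a seminorm with $|z|_D \le \|G_D\|^{1/2}|z|$, and that $u_h(x_{i+1})-u_h(x_i)=\int_{x_i}^{x_{i+1}} u_h'\dv{x}$, these quotients are bounded by $\|G_D\|^{1/2}\|u_h'\|_{L^\infty(I)}$. The factor $\|u_h'\|_{L^\infty(I)}$ is in turn estimated from the constraint $|u_h'(x_i)|=1$ together with $\|u_h'\|_{L^\infty(x_i,x_{i+1})}\le 1+h^{1/2}\|u_h''\|_{L^2(x_i,x_{i+1})}$ and the first displayed bound, so that $g_h$ is $L$-Lipschitz with $L \le \|G_D\|^{1/2}\big(1+c\,h^{1/2}(E/\k)^{1/2}\big)$.

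Finally I would exploit that a nonnegative $L$-Lipschitz function attaining its maximum $M$ at $x_{i^*}$ dominates the triangular profile $(M-L|x-x_{i^*}|)_+$, whence
\[
4\veps E \ge \|g_h\|_{L^2(I)}^2 \ge \int_I \big(M-L|x-x_{i^*}|\big)_+^2\dv{x} \ge \frac{M^3}{3L}.
\]
Solving for $M$ gives $M \le (12L)^{1/3}\veps^{1/3}E^{1/3}$, which is the assertion with $c=(12L)^{1/3}$; the cubic profile is precisely what produces the exponent $1/3$. The main obstacle is this last constant: $L$ — equivalently the $W^{1,\infty}$-seminorm of $u_h$ — is not a universal number but depends mildly on $h$ and on the energy through $\|u_h''\|_{L^2(I)}$, so obtaining a clean $c$ requires absorbing $h^{1/2}\|u_h''\|_{L^2(I)}$ by using $h\le 1$ together with the a priori energy decay of the scheme. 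Making this Lipschitz control uniform is the only genuinely delicate point; the convexity comparison and the elementary profile integral are routine by comparison.
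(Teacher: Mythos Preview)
Your argument is correct and follows the same strategy as the paper: bound the $L^2$ norm of the interpolated penetration by the potential part of $E_{h,\veps}$, bound its Lipschitz constant via the (nodal) arc-length constraint, and interpolate between the two to obtain the $L^\infty$ bound with exponent $1/3$. The only difference is cosmetic: the paper quotes the Gagliardo--Nirenberg inequality $\|f\|_{L^\infty}\le c\,\|f\|_{L^2}^{2/3}\|f\|_{W^{1,\infty}}^{1/3}$ for the interpolation step, whereas your triangular-profile computation $\|g_h\|_{L^2}^2\ge M^3/(3L)$ is simply an elementary proof of this inequality in the case at hand; the Jensen step $g_h^2\le\cI_{1,h}[w^2]$ is likewise just making explicit what the paper uses when it bounds $\|\cI_{1,h}(|u_h|_D-1)_+\|_{L^2}^2$ by the discrete potential term.

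Your concern about the constant is legitimate but applies equally to the paper's proof. There the $W^{1,\infty}$ bound is asserted to follow from ``stability of the nodal interpolation operator in $W^{1,\infty}$ and the nodal constraints $|u_h'(x_i)|^2=1$'', without spelling out that passing from the nodal values $|u_h'(x_i)|=1$ to a bound on $\|u_h'\|_{L^\infty(I)}$ (and hence on the difference quotients of $|u_h|_D$) also uses $\|u_h''\|_{L^2}$, i.e.\ the bending part of the energy. So in both arguments the constant $c$ tacitly depends on an a~priori energy bound; you have merely been more honest about it.
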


\begin{proof}
The Gagliardo--Nirenberg inequality bounds the norm in $L^p(I)$ 
by the product of norms in $L^q(I)$ and $W^{1,r}(I)$ with exponents $1-\a$ and $\a$ 
such that $\frac1p+\frac1q=\alpha(1-\frac1r+\frac1q)$, cf.~\cite{Leon17-book}.
With $p=r=\infty,q=2,\alpha=1/3$ we have
\[
\|\cI_{1,h} (|u_h|_D-1)_+\|_{L^\infty(I)}\leq 
c \|\cI_{1,h} (|u_h|_D-1)_+\|_{L^2(I)}^{2/3}\|\cI_{1,h}(|u_h|_D-1)_+\|_{W^{1,\infty}(I)}^{1/3}.
\]
The $W^{1,\infty}$ norm can be uniformly bounded due to stability of the nodal
interpolation operator in $W^{1,\infty}$ and the nodal constraints $|u_h'(x_i)|^2 =1$, $i=0,1,\dots,N$. 
The term  $(\veps^{-1} \|\cI_{1,h}(|u_h|_D-1)_+\|_{L^2}^{2})^{1/3}$ is bounded by the third
root of the potential part of the discrete energy. 
\end{proof}

\begin{remark} 	\label{rem:epsilon-dependency}
A stronger estimate on the constraint violation can be derived if the solution
$u$ and the Lagrange multiplier $\l$ are sufficiently regular, so that the 
Euler--Lagrange equations hold in strong form, i.e., $\kappa u^{(4)} +  \veps^{-1} \nabla V_D(u) = (\l u')'$, 
which implies $\|\nabla V_D(u)\|_{L^\infty(I)} = \cO(\veps)$, where $|\nabla V_D(y|$ is proportional
to the distance of a point $y\in \R^3$ to the set $D$.
\end{remark} 

Our third convergence result follows from the unconditional energy stability of the
numerical scheme and states that the sequence of corrections $(d_t u_h^k)_{k=1,2,\dots}$ 
converges to zero as $k\to \infty$. Moreover, it provides a bound on the violation
of the arclength constraint due to its linearized treatment.

\begin{proposition}
The iterates $(u_h^k)_{k=0,1,\dots}$ of the scheme~\eqref{eq:fully_discr_grad_flow} satisfy
\[
E_{h,\veps}[u_h^K] + \tau \sum_{k=1}^K \|d_t u_h^{k+1}\|_\star^2 \le E_{h,\veps}[u_h^0],
\]
for all $K\ge 0$, and, provided that $\|v\|_{L^\infty(I)} \le c_\star \|v\|_\star$,
\[
\|\cI_{1,h} |u_h'|^2 - 1\|_{L^\infty(I)} \le c_\star^2 \tau E_{h,\veps}[u_h^0].
\]
\end{proposition}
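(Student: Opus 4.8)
The plan is to derive both estimates by testing the scheme~\eqref{eq:fully_discr_grad_flow} with the admissible choice $v_h = d_t u_h^{k+1}$, which is legitimate since the scheme produces $d_t u_h^{k+1}\in\cF_h[u_h^k]$ by construction. Writing $u_h^{k+1} = u_h^k + \tau d_t u_h^{k+1}$, so that $[u_h^k]'' + \tau[d_t u_h^{k+1}]'' = [u_h^{k+1}]''$ and the convex penalty argument is $u_h^{k+1}$, this turns every implicit contribution on the left into a pairing of a quantity at $u_h^{k+1}$ with the increment. First I would treat the two implicit quadratic terms with the polarization identity $a\cdot(a-b) = \tfrac12(|a|^2 - |b|^2 + |a-b|^2)$: for the bending term this gives $\kappa([u_h^{k+1}]'',[d_t u_h^{k+1}]'') = \tfrac{\kappa}{2\tau}(\|[u_h^{k+1}]''\|^2 - \|[u_h^k]''\|^2) + \tfrac{\kappa\tau}{2}\|[d_t u_h^{k+1}]''\|^2$, and for the $G_D$ term, using that $G_D$ is symmetric positive semi-definite so that $(\cdot,G_D\,\cdot)_h$ is a semi-inner product, the same identity yields the difference of $\tfrac{1}{2\veps}(u_h,G_D u_h)_h = \tfrac{1}{2\veps}\int_I\cI_{1,h}|u_h|_D^2\dv{x}$ at consecutive iterates plus a nonnegative remainder. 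Both remainders are discarded.

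The crux of the first estimate is the \emph{explicitly} treated concave term $-(2\veps)^{-1}(\nabla V_D^{\text{cv}}(u_h^k),d_t u_h^{k+1})_h$. Here I would use that, since $(\cdot,\cdot)_h$ is the mass-lumped product $\int_I\cI_{1,h}(\cdot)\dv{x}$, it reduces to a nodal quadrature with nonnegative weights, so it suffices to argue at each node $x_i$. Concavity of $V_D^{\text{cv}}$ gives the tangent inequality $\nabla V_D^{\text{cv}}(u_h^k(x_i))\cdot(u_h^{k+1}(x_i) - u_h^k(x_i)) \ge V_D^{\text{cv}}(u_h^{k+1}(x_i)) - V_D^{\text{cv}}(u_h^k(x_i))$; multiplying by the positive weights and by $1/\tau$ and summing yields $-(2\veps)^{-1}(\nabla V_D^{\text{cv}}(u_h^k),d_t u_h^{k+1})_h \le -\tfrac{1}{2\veps\tau}\int_I\cI_{1,h}(V_D^{\text{cv}}(u_h^{k+1}) - V_D^{\text{cv}}(u_h^k))\dv{x}$. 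This anti-monotonicity is precisely what compensates for the explicit discretization, and it is the step I expect to be the main obstacle, as it is where unconditional stability is bought. Collecting the three contributions and recalling $|y|_D^2 + V_D^{\text{cv}}(y) = 2V_D(y)$, the quadratic and concave differences recombine into the penalty part of $E_{h,\veps}$, giving the one-step inequality $\tau\|d_t u_h^{k+1}\|_\star^2 + E_{h,\veps}[u_h^{k+1}] \le E_{h,\veps}[u_h^k]$ (up to the normalization of the penalty constant). Summing this telescoping inequality over $k$ from $0$ to $K-1$ produces the first claim.

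For the arclength estimate I would start from the nodal expansion $|[u_h^{k+1}]'(x_i)|^2 = |[u_h^k]'(x_i)|^2 + 2\tau[u_h^k]'(x_i)\cdot[d_t u_h^{k+1}]'(x_i) + \tau^2|[d_t u_h^{k+1}]'(x_i)|^2$. The linearized orthogonality~\eqref{eq:ortho-semidiscr}, valid at the nodes because $d_t u_h^{k+1}\in\cF_h[u_h^k]$, annihilates the cross term, so $|[u_h^{k+1}]'(x_i)|^2 = |[u_h^k]'(x_i)|^2 + \tau^2|[d_t u_h^{k+1}]'(x_i)|^2$. Since $u_h^0\in\cA_h$ gives $|[u_h^0]'(x_i)|^2 = 1$, iterating yields $|[u_h^K]'(x_i)|^2 - 1 = \tau^2\sum_{k=0}^{K-1}|[d_t u_h^{k+1}]'(x_i)|^2 \ge 0$. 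As $\cI_{1,h}|u_h'|^2 - 1$ is piecewise affine it attains its extremum at a node, so its $L^\infty$-norm equals $\max_i(|[u_h^K]'(x_i)|^2 - 1)$. I would then bound each nodal derivative value through the stated embedding $\|\cdot\|_{L^\infty}\le c_\star\|\cdot\|_\star$ to obtain $\tau^2\sum_{k=0}^{K-1}|[d_t u_h^{k+1}]'(x_i)|^2 \le c_\star^2\tau^2\sum_{k=0}^{K-1}\|d_t u_h^{k+1}\|_\star^2$, and finally invoke the energy bound $\tau\sum_{k=0}^{K-1}\|d_t u_h^{k+1}\|_\star^2 \le E_{h,\veps}[u_h^0]$ just established to absorb one power of $\tau$ and conclude $\|\cI_{1,h}|u_h'|^2 - 1\|_{L^\infty(I)}\le c_\star^2\tau E_{h,\veps}[u_h^0]$.
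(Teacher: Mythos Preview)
Your argument is correct and follows essentially the same route as the paper: test~\eqref{eq:fully_discr_grad_flow} with $v_h=d_tu_h^{k+1}$, use the polarization identity for the implicit quadratic terms, exploit the nodal concavity inequality $V_D^{\text{cv}}(u_h^k)+\nabla V_D^{\text{cv}}(u_h^k)\cdot(u_h^{k+1}-u_h^k)\ge V_D^{\text{cv}}(u_h^{k+1})$ for the explicit term, and sum the resulting telescoping one-step estimate; then for the arclength bound use the nodal Pythagorean relation $|[u_h^k]'|^2=|[u_h^{k-1}]'|^2+\tau^2|[d_tu_h^k]'|^2$ together with the energy estimate. Your write-up is in fact more explicit than the paper's at every step; the only point where both you and the paper are somewhat informal is the passage from $|[d_tu_h^{k+1}]'(x_i)|$ to $\|d_tu_h^{k+1}\|_\star$, which really requires the embedding in the form $\|v'\|_{L^\infty(I)}\le c_\star\|v\|_\star$ rather than the stated $\|v\|_{L^\infty(I)}\le c_\star\|v\|_\star$.
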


\begin{proof}
By concavity of $V_D^\text{cv}$ we have that
\[
V_D^\text{cv}(u_h^k) + \nabla V_D^\text{cv}(u_h^k) \cdot \big( u_h^{k+1} - u_h^k\big) \ge V_D^\text{cv}(u_h^{k+1}) .
\]
This implies that by choosing $v_h = d_t u_h^{k+1}$ in~\eqref{eq:fully_discr_grad_flow} we have
\[
\|d_t u_h^{k+1}\|^2 + d_t \Big\{ \frac12 \|[u_h^{k+1}]''\|^2 
+ \frac{1}{2\veps} \int_I \cI_h V_D(u_h^{k+1}) \dv{x} \Big\} \le 0.
\]
Multiplication by $\tau$ and summation over $k=0,1,\dots,K-1$ yield the stability 
estimate. The orthogonality $[u_h^{k-1}]' \cdot [d_t u_h^k]'$ for $k=1,2,\dots,K$ 
at the nodes leads to the relation 
\[
|[u_h^k]'|^2  = |[u_h^{k-1}]'|^2 + \tau^2 |[d_t u_h^k]'|^2.
\]
Summing this identity over $k=1,2,\dots,K$, noting $|[u_h^0]'|^2 =1$ at the nodes,
and including the energy stability prove the estimate. 
\end{proof}

\section{Elasticae in balls and cylinders}
	
Our numerical calculations are performed in \textsc{Matlab} and with the \knotevolve web application 
\cite{BaFaWe20}. We consider closed elastic rods confined to balls and cylinders.
The highly symmetric stationary configurations found for balls give rise to the following 
definition that provides a concise classification via two integer numbers.  
For the scalar product $(d_tu_h^{k+1},v_h)_\star$, we use the $L^2$ inner product. The parameter
values $\kappa=10$ and $\veps=1/(10\kappa)$ are employed if not stated otherwise.

\begin{definition}
An arclength parametrized curve is called a \emph{$\mu$-circle} if it is a $\mu$-fold covered 
planar circle. It is called a \emph{$\mu$-$\nu$-clew} if it shows a $\nu$-fold symmetry around 
one axis running through the center of the ball. The integer $\mu$ is then defined as the winding number of $u$ around the 
rotational axis.
\end{definition}
	
As an illustrative example of the gradient flow, we use a trefoil knot of length $31.9$ 
that is confined to a ball of radius $4.6$ with $h\approx0.3$ and $\tau=0.1h$.~\footnote{The example can be run via
\url{aam.uni-freiburg.de/knotevolve/torus-2-3-97?Rho=0&CnfmType=ellipsoid&CnfmRadius=4.6,4.6,4.6&tmax=30000&StepW=0.1}}
Snapshots of the evolution are depicted in Figure~\ref{fig:evolution}. 
Also, the bending energy $\kappa\|[u_h^k]''\|^2/2$, the confinement energy $(2\veps)^{-1}E_D[u_h^k]$ and the violation of arc-length parametrization $\|\mathcal{I}_{1,h}\{|[u_h^k]'|^2-1\}\|_{L^\infty}$ are visualized as a function of $k$.

First, the trefoil 
knot evolves into a double-covered circle. At some point, it unfolds into a bent 
lemniscate whose outermost points reach the surface of the ball. This configuration then 
moves to the left and starts to unfold into a buckled circle that runs close to 
the ball's surface. The final elastica is a 1-2-clew. The symmetry axis of the elastica, which is also depicted in Figure~\ref{fig:evolution}, is different from the symmetry axis of the initial curve. 
The local curvature of the 1-2-clew is periodic along the curve with periodicity 4. We generally observe that the curvature of a $\mu$-$\nu$-clew is $2\nu$-periodic.

A similar shape 
was previously also obtained for modelling semiflexible biopolymers in spherical domains 
that are slightly smaller than the flat circle of the same length, cf.~\cite{Ostermeir2010}. 
The shape that we call 1-2-clew also arises when packing a thick rope of maximal length 
without self-penetration on the sphere \cite{Gerlach2011}. 
	
\begin{figure}
\centering
\includegraphics[width=\linewidth]{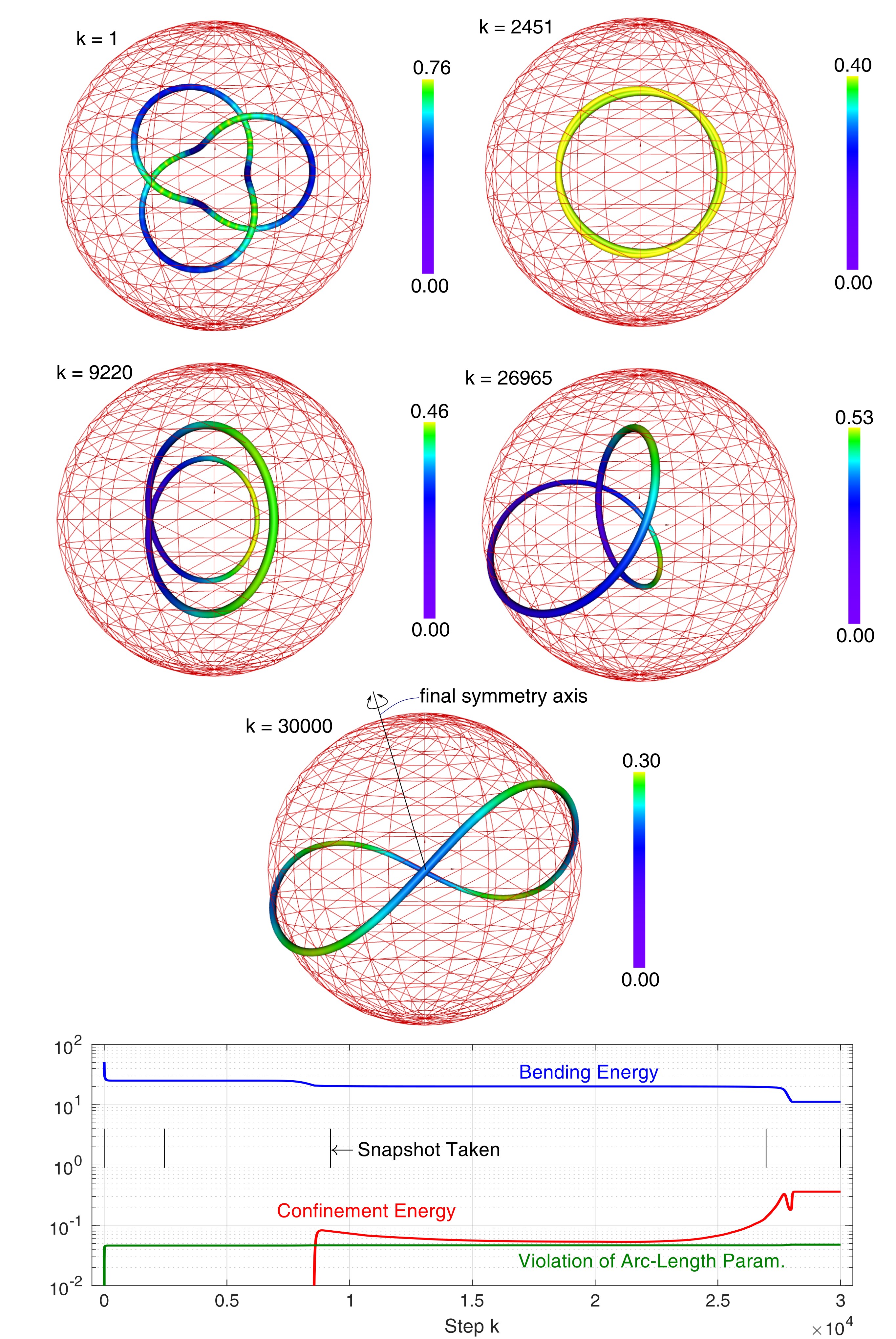}
\caption{Evolution of a closed elastic curve in a ball of radius $R=4.6$ to a
1-2-clew. The number
 $k$ indicates the time steps, the color scheme represents local curvature. Confinement and 
total energy, and violation of arc-length parametrization are shown in the lower 
right panel. }
\label{fig:evolution}
\end{figure}
	
When confining a rod of length $L$ to balls of varying radii, a multitude of equilibrium 
configurations is observed. Examples are illustrated in Figure~\ref{fig:clews} where we used
$C_4$ and $C_5$ symmetric initial configurations. Both, the symmetry number $\nu$ and the
winding number $\mu$ depend on the symmetry of the initial configuration and 
on the ratio $L/R$. All elasticae run close 
to the ball's surface and slightly exceed the confining domain. With decreasing radius, either 
the winding number or the symmetry number are gradually increased by 2. This follows 
from an increasing number of self-intersections of the rod that always affect two of 
its segments. The gradient flow used to minimize the energy preserves certain symmetries.
When starting with an even symmetry, the elastica is an odd-even-clew 
or an odd-circle as illustrated in Figure~\ref{fig:clews}. An initially odd rotational symmetry 
in turn generally leads to even-odd-clews or even-circles. An exception is the 
transition from the 2-1-clew to the 1-2-clew as illustrated in the introductory example. 
This transition involves large deformations. When the radius of the ball is too small, 
the confinement is too restrictive for the curve to undergo such large transitions.

\begin{figure}[p]
\centering
\includegraphics[width=.92\linewidth]{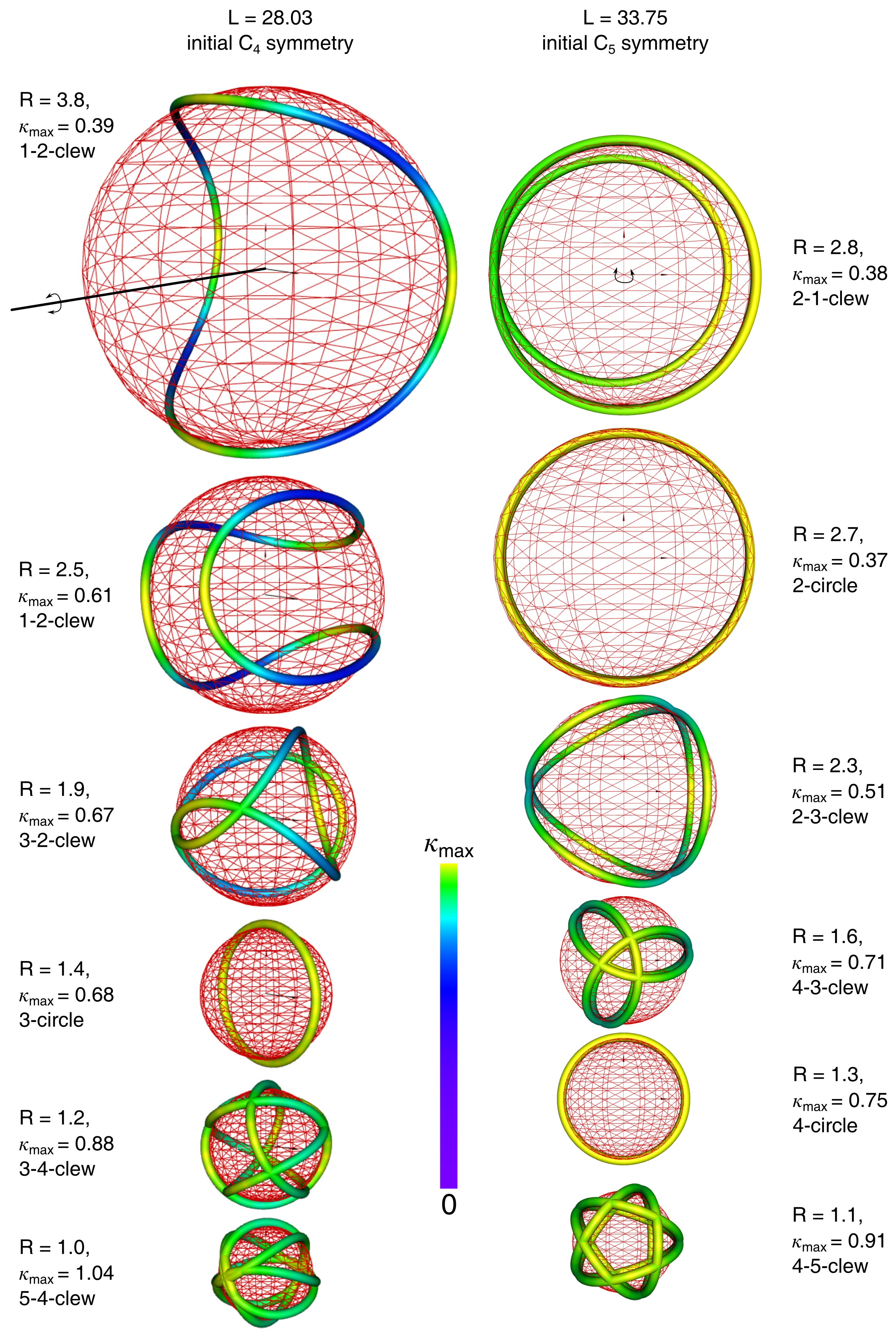}
\caption{Elastica shapes obtained for two different initial configurations (left and 
right column, respectively) and varying radii $R$. The color of the rod indicates the 
local curvature. The symmetry axis of the left column is indicated for the top configuration. 
In the right column, symmetry and view axes coincide.}
\label{fig:clews}
\end{figure}

The unconfined elastica of a closed rod of length $L$ is the circle of radius $r_L=L/(2\pi)$. The bending energy of this elastica is given by $E_{L}=\kappa L/(2r_L^2)=2\kappa\pi^2/L$. 
To categorize the equilibrium configurations of closed rods that are confined to balls of radius $R$, we evaluate $(E_\text{bend}/E_L)^{1/2}$ as a function of $r_L/R$. The first quantity measures the excess bending induced by the confinement, whereas the second quantity determines how many times too small the confining ball is compared to the unconfined elastica. We remark that for $\mu$-fold covered circles, both quantities equal $\mu$. 
	
When starting with the four- and five-fold symmetric initial configurations, we observe 
two distinct families of the bending energy dependency on the ratio $r_L/R$. The result 
is shown in Figure~\ref{fig:elasticae}. The elasticae in the five-fold symmetric case follow 
the pattern 1-circle, 1-2-clew, 3-2-clew, 3-circle, 3-4-clew, 5-4-clew, etc. 
for increasing $r_L/R$. In the four-fold symmetric case, we find (1-circle, 1-2-clew,) 
2-1-clew, 2-circle, 2-3-clew, 4-3-clew, 4-circle, 4-5-clew, and so on. The first two are 
special as they involve a large deformation of the rod when transiting from the even-odd 
to odd-even. Both patterns are very regular and are expected to continue for larger $r_L/R$. 
	
During the unfolding process, intermediate nearly stationary configurations are observed. 
These include a shape that could be called a 1-3-clew or multiply covered circles that 
are completely inside the ball. This can be seen in Figure~\ref{fig:evolution}: The initial 
rod evolves into a two-fold covered circle in the first place; later, the circle opens up. 
These configurations seem to be saddle-point structures as they are attractive with respect 
to the previous configuration, whereas there are adjacent configurations with smaller 
total energy. As the numerical representation of the rod cannot match those saddle-points 
perfectly, the rod exits those configurations after a certain number of steps. 
	
For irregularly shaped initial rod configurations, the previously described elastica shapes 
are found as well.~\footnote{See for instance a knot with crossing number 10 relaxing 
into a 3-2-clew: \url{aam.uni-freiburg.de/knotevolve/10_053?Rho=0&StepW=0.1&CnfmType=ellipsoid&CnfmRadius=3,3,3}}
Hence, the symmetry of the final shape can be attributed to solely the ratio $r_L/R$ 
and to the question whether the initial configuration prefers the odd-even or even-odd 
elasticae family.
	
\begin{figure}
\centering
\includegraphics[width=.8\linewidth]{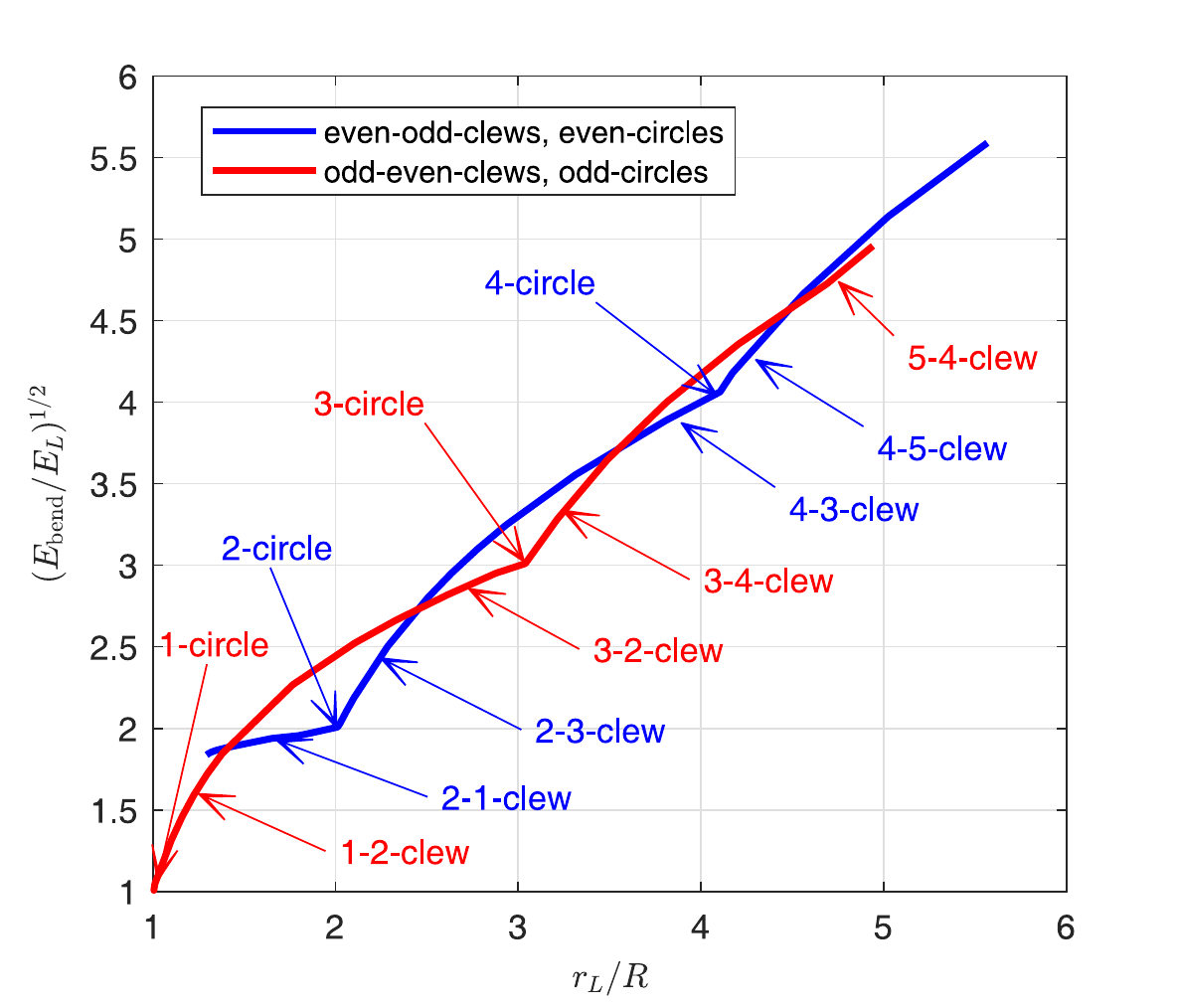}
\caption{Normalized square root of the bending energy depending on the ratio $r_L/R$ 
for the two observed elastica families}
\label{fig:elasticae}
\end{figure}

\begin{conjecture}
Consider a rod of length $L$ confined to a ball of radius $R$. Let $r_L=L/(2\pi)$ be the 
radius of the unconfined elastica and let $j\in\mathbf{N}$ be a 
positive integer with $j<r_L/R<j+1$. Then there is a number $\xi(j)\in(0,1)$ such 
that the globally least bent confined elastica is a $j$-($j+1$)-clew if $r_L/R<j+\xi$. 
Else, the global optimizer is the ($j+1$)-$j$-clew. If $r_L/R<1$, the flat 1-circle 
is an exactly confined elastica. In the cases where $r_L/R=j$ holds, the $j$-fold 
covered circle is the global elastica.
\end{conjecture}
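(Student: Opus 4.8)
The plan is to (i) settle the two exactly confined regimes directly, (ii) reduce the geometry to the sphere $\partial B_R$ and invoke the classification of spherical elasticae, (iii) compare the two clew families through explicit elliptic-integral energies, and (iv) confront the genuine obstacle of global optimality. Existence of a minimizer of $E_{\text{bend}}$ in $\cA\cap D$ for each $R$ follows from the direct method: $\cA\cap D$ is weakly closed in $H^2(I;\R^3)$ and, once the bending energy is bounded, bounded; weak lower semicontinuity of $E_{\text{bend}}$ does the rest. Convexity of the ball $B_R=D$ yields regularity of minimizers, with the confinement reaction a nonnegative measure supported on the contact set $\{u\in\partial B_R\}$. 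The two exact cases are immediate. For $r_L/R<1$ the round circle of radius $r_L$ lies in $B_R$ and already minimizes $E_{\text{bend}}$ without constraint, so it is the global optimizer. For $r_L/R=j$ I would use the total-curvature bound $\int_I|u''|\,\mathrm{d}x\ge L/R$ valid for closed curves in $B_R$ together with Cauchy--Schwarz to get $E_{\text{bend}}\ge \tfrac{\kappa}{2}(L/R)^2/L=(r_L/R)^2E_L=j^2E_L$, with equality exactly for constant curvature $1/R$, i.e.\ the $j$-fold covered circle.

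The structural heart is a reduction to the sphere. The claim to establish is that for $r_L/R>1$ every global minimizer lies on $\partial B_R$; granting this, $|u''|^2=\kappa_g^2+R^{-2}$ with geodesic curvature $\kappa_g$, so that
\[
\frac{E_{\text{bend}}}{E_L}=\Big(\frac{r_L}{R}\Big)^2+\frac{\kappa}{2E_L}\int_I\kappa_g^2\,\mathrm{d}x .
\]
Minimizing $E_{\text{bend}}$ on $\partial B_R$ thus amounts to minimizing the total squared geodesic curvature among closed length-$L$ spherical curves, whose critical points are precisely the closed spherical elasticae of Langer and Singer. I would identify the $j$-$(j{+}1)$- and $(j{+}1)$-$j$-clews with the two relevant families of closed wavelike spherical elasticae, parametrize each by its elliptic modulus $k$, and impose closure and the length constraint to obtain $E_{\text{bend}}$ as an explicit function $e_{\mu,\nu}(r_L/R)$ given through complete elliptic integrals.

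The comparison is then a continuity argument. As $r_L/R\to j^+$ the $j$-$(j{+}1)$-clew degenerates to the $j$-fold great circle, so $e_{j,j+1}\to j^2$ and falls strictly below $e_{j+1,j}$; symmetrically $e_{j+1,j}$ wins as $r_L/R\to(j{+}1)^-$. The intermediate value theorem yields a crossing $\xi(j)\in(0,1)$, and I would deduce its uniqueness from strict monotonicity of $e_{j,j+1}-e_{j+1,j}$, read off from the $k$-derivatives of the governing elliptic integrals. This would give the claimed dichotomy on $(j,j+1)$.

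The hard part is step (iv): upgrading \emph{best among clews} to \emph{global minimizer}. Two ingredients are missing. First, one must justify that an optimizer cannot lower its energy by retreating into the interior of $B_R$ instead of resting on the sphere; I would attack this through the sign of the contact reaction in the Euler--Lagrange system, but the required exclusion of partially interior competitors is delicate precisely when $r_L/R$ is close to $1$. Second, one needs a symmetry-rigidity result forcing the $\nu$-fold and winding structure of the optimizer, which is an instance of the notoriously open problem of proving symmetry of minimizers for nonconvex constrained bending functionals: inextensibility and closedness obstruct rearrangement, and no maximum principle is available. I expect these two points, rather than the energy bookkeeping, to be the decisive obstacles, which is why the statement is posed as a conjecture.
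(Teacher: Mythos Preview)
The paper does not prove this statement; it is explicitly labeled a \emph{Conjecture}, supported only by the numerical experiments summarized in Figures~\ref{fig:clews} and~\ref{fig:elasticae}. There is therefore no proof in the paper to compare your proposal against. The authors themselves remark, in connection with the Brunnett--Crouch description of spherical elasticae, that establishing even the narrower claim that all ball-confined elasticae lie on the sphere ``would go beyond the scope of this manuscript.''

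Your outline is a reasonable plan of attack and in fact dovetails with the paper's own hints: the reduction to spherical elasticae via the geodesic-curvature ODE, and the identification of $\mu$-$\nu$-clews with closed wavelike solutions parametrized by an elliptic modulus. You are also right to isolate steps~(ii) and~(iv) --- the sphere reduction and the symmetry rigidity --- as the genuine obstacles; these are precisely why the statement is left open. One caveat on step~(i): the total-curvature bound $\int_I|u''|\,\mathrm{d}x\ge L/R$ you invoke for closed curves merely \emph{contained in} $B_R$ is not a standard fact and would itself need proof. On $\partial B_R$ it follows from the pointwise inequality $|u''|\ge 1/R$, but that inequality fails in the interior, so you are tacitly assuming the sphere reduction already in the $r_L/R=j$ case; moreover, constant curvature $1/R$ alone does not force a multiply covered great circle in three dimensions without also controlling torsion. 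In short, your proposal is an honest roadmap toward a proof that does not yet exist, and your closing paragraph correctly diagnoses why.
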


The parameter $\varepsilon$ can be understood as a length-scale of maximal penetration 
into the complement of $D$. The numerical results shown in Figure~\ref{fig:epsilon}, indicate 
that
\[
\max\limits_{j=0,\ldots,N}(|u_h^k(x_j)|_D-1)_+=\mathcal{O}(\varepsilon).
\]
Thus, the maximal penetration decreases linearly with $\varepsilon$ whilst not depending on 
whether the initial rod configuration lies inside $D$. This indicates that the estimate as 
sketched in Remark~\ref{rem:epsilon-dependency} is valid in the case of spherical confinements. 

It can be observed that when varying $\varepsilon$ for the 
same initial configuration the final shape is not unique (cf. Figure~\ref{fig:epsilon}): 
In the first example (dots in Figure~\ref{fig:epsilon}), mainly 1-2-clews were obtained as 
final shapes, but for one value of $\veps$, also a 2-1-clew could be observed. 
The second example that relaxed to a 2-3-clew for sufficiently small $\varepsilon$ 
(crosses in Figure~\ref{fig:epsilon}), turned 
into a two-fold covered circle if the confinement was too weak.

\begin{figure}
	\centering
	\includegraphics[width=.6\linewidth]{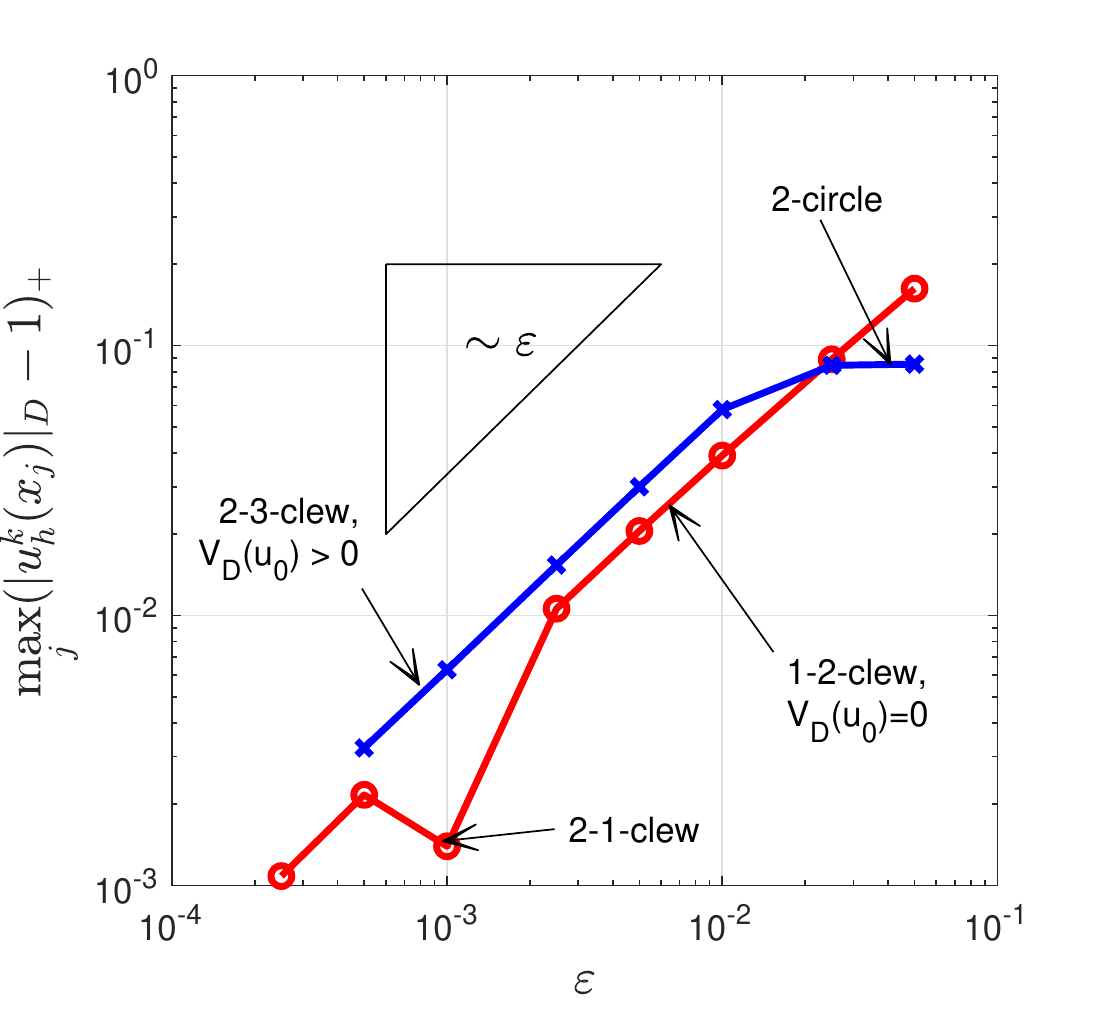}
	\caption{Maximal nodal penetration outside the confinement 
		domain depending on $\varepsilon$.}
	\label{fig:epsilon}
\end{figure}
	
Interestingly, all elasticae for $r_L/R\geq1$ lie on the surface of the ball up to the 
penetration due to the finite potential. Brunnett and Crouch \cite{Brunnett1994} derived a 
differential equation for the geodesic curvature $\kappa_g$ of the rod on the sphere, i.e. 
the projection of the total curvature on the tangent plane in each point: 
\begin{equation*}
\kappa_g''+\frac12\kappa_g^3+C\kappa_g=0.
\end{equation*}
Here, $C$ is a constant consisting of the tension energy of the rod and 
the sphere square curvature. This differential equation can be solved by the Jacobi elliptic 
cosine function. The parameters of these functions must be adjusted such that the curvature is periodic. When using the solution to calculate the actual rod position, a nine-component ODE is solved, again imposing periodicitiy.
This raises the question whether the resulting configurations coincide with our experimentally observed $\mu$-$\nu$-clews and $\mu$-circles, thus leading to an analytic definition of our clews. For the $\mu$-$\nu$-clew, $\nu$ and $\mu$ should arise when taking the periodic boundary conditions into account for the geodesic curvature and the rod position, respectively. It would be also of analytic interest if all elasticae confined to balls are actually elasticae on the sphere.
A proof would however go beyond the scope of this manuscript.

We close our discussion by experimentally investigating closed curves confined to cylinders
of different heights and radii.~\footnote{See for instance \url{aam.uni-freiburg.de/knotevolve/torus-1-13-100?Rho=0&StepW=0.2&CnfmType=cylinder-z&CnfmRadius=3,3,3&tmax=200000}} As shown in Figure~\ref{fig:elasticae_cyl}, large cylinder heights apparently lead to flat configurations that resemble semi-circles connected by straight lines. If the cylinder is long enough, we assume that the true global elastica consists of two semi-circles connected by two straight lines. If height and diameter are equal, we observe a shape similar to a 4-3-clew (for height and diameter 6) or a 3-2-clew (for height and diameter 8). Other combinations of height and diameter reveal a large variety of optimal shapes. A concise classification as in the case of spherical confinement however is not obvious.

\begin{figure}[p]
\begin{minipage}{.45\linewidth}
\includegraphics[width=1\linewidth]{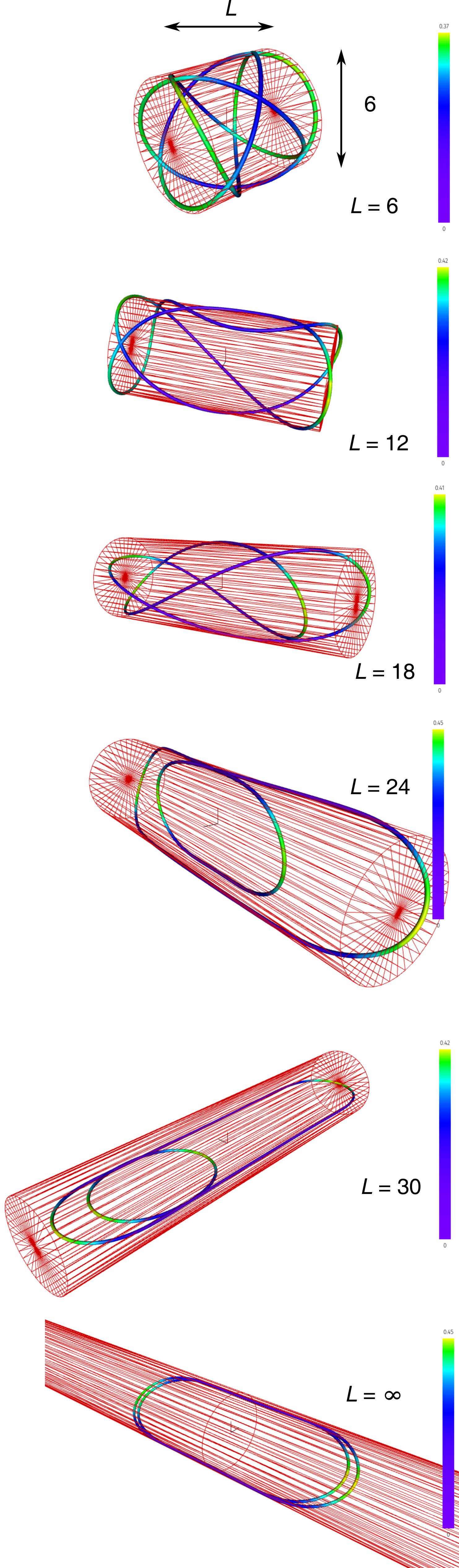}
\end{minipage}
\hspace*{5mm}
\begin{minipage}{.42\linewidth}
\vspace*{-66mm}
\includegraphics[width=1\linewidth]{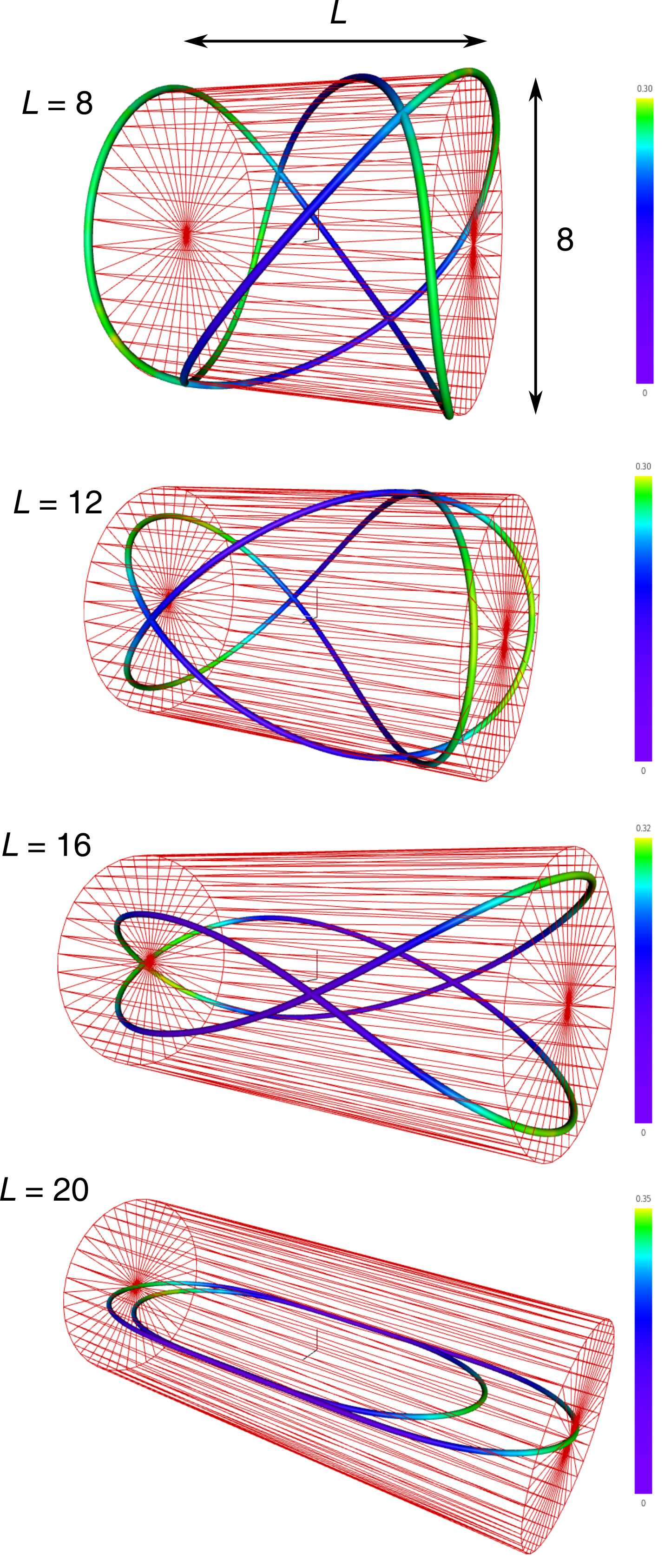}
\end{minipage}
\caption{Stationary closed curves in cylindrical domains with radii
$R=3$ (left) and $R=4$ (right) and different heights (increasing
from top to bottom).}
\label{fig:elasticae_cyl}
\end{figure}
 
\section*{References}
\printbibliography[heading=none]

@article{Walker2021,
      author = {Walker, Shawn},
      year = {2021},
      title = {A Descent Scheme for Thick Elastic Curves with Self-contact and Container Constraints},
      journal = {Preprint},
      url = {https://www.math.lsu.edu/~walker/pdfs/Walker2021_elastic_curve_self_contact.pdf},
}

@incollection {Bartels2019,
    AUTHOR = {Bartels, S\"{o}ren},
     TITLE = {Finite element simulation of nonlinear bending models for thin
              elastic rods and plates},
 BOOKTITLE = {Geometric partial differential equations. {P}art {I}},
    SERIES = {Handb. Numer. Anal.},
    VOLUME = {21},
     PAGES = {221--273},
 PUBLISHER = {Elsevier/North-Holland, Amsterdam},
      YEAR = {2020},
   MRCLASS = {74S05 (65N30 74K10 74K20)},
  MRNUMBER = {4378428},
}

@article{Gerlach2011,
    AUTHOR = {Gerlach, Henryk and von der Mosel, Heiko},
     TITLE = {What are the longest ropes on the unit sphere?},
   JOURNAL = {Arch. Ration. Mech. Anal.},
  FJOURNAL = {Archive for Rational Mechanics and Analysis},
    VOLUME = {201},
      YEAR = {2011},
    NUMBER = {1},
     PAGES = {303--342},
      ISSN = {0003-9527},
   MRCLASS = {58E10 (49Q10 92E10)},
  MRNUMBER = {2807140},
MRREVIEWER = {Anders Linn\'{e}r},
       DOI = {10.1007/s00205-010-0390-y},
       URL = {https://doi.org/10.1007/s00205-010-0390-y},
}

@article {Dondl2011,
    AUTHOR = {Dondl, Patrick W. and Mugnai, Luca and R\"{o}ger, Matthias},
     TITLE = {Confined elastic curves},
   JOURNAL = {SIAM J. Appl. Math.},
  FJOURNAL = {SIAM Journal on Applied Mathematics},
    VOLUME = {71},
      YEAR = {2011},
    NUMBER = {6},
     PAGES = {2205--2226},
      ISSN = {0036-1399},
   MRCLASS = {49Q10 (35R35 49J52 74G65)},
  MRNUMBER = {2873265},
MRREVIEWER = {Andrzej M. My\'{s}li\'{n}ski},
       DOI = {10.1137/100805339},
       URL = {https://doi.org/10.1137/100805339},
}

@article{Boue2006,
  title = {Spiral Patterns in the Packing of Flexible Structures},
  author = {Bou\'e, L. and Adda-Bedia, M. and Boudaoud, A. and Cassani, D. and Couder, Y. and Eddi, A. and Trejo, M.},
  journal = {Phys. Rev. Lett.},
  volume = {97},
  issue = {16},
  pages = {166104},
  numpages = {4},
  year = {2006},
  month = {Oct},
  publisher = {American Physical Society},
  doi = {10.1103/PhysRevLett.97.166104},
  url = {https://link.aps.org/doi/10.1103/PhysRevLett.97.166104}
}

@article{Donato2003,
  title = {Scaling properties in the packing of crumpled wires},
  author = {Donato, C. C. and Gomes, M. A. F. and de Souza, R. E.},
  journal = {Phys. Rev. E},
  volume = {67},
  issue = {2},
  pages = {026110},
  numpages = {8},
  year = {2003},
  month = {Feb},
  publisher = {American Physical Society},
  doi = {10.1103/PhysRevE.67.026110},
  url = {https://link.aps.org/doi/10.1103/PhysRevE.67.026110}
}

@article{Ranner2020,
    AUTHOR = {Ranner, Thomas},
     TITLE = {A stable finite element method for low inertia undulatory
              locomotion in three dimensions},
   JOURNAL = {Appl. Numer. Math.},
  FJOURNAL = {Applied Numerical Mathematics. An IMACS Journal},
    VOLUME = {156},
      YEAR = {2020},
     PAGES = {422--445},
      ISSN = {0168-9274},
   MRCLASS = {65M60 (65M12 70E99 74K10)},
  MRNUMBER = {4106654},
MRREVIEWER = {Minling Zheng},
       DOI = {10.1016/j.apnum.2020.05.009},
       URL = {https://doi.org/10.1016/j.apnum.2020.05.009},
}

@article{Ostermeir2010,
  title = {Buckling of stiff polymer rings in weak spherical confinement},
  author = {Ostermeir, Katja and Alim, Karen and Frey, Erwin},
  journal = {Phys. Rev. E},
  volume = {81},
  issue = {6},
  pages = {061802},
  numpages = {8},
  year = {2010},
  month = {Jun},
  publisher = {American Physical Society},
  doi = {10.1103/PhysRevE.81.061802},
  url = {https://link.aps.org/doi/10.1103/PhysRevE.81.061802}
}

@article{Choi2005,
author = {Choi, M. C. and Santangelo, C. D. and Pelletier, O. and Kim, J. H. and Kwon, S. Y. and Wen, Z. and Li, Y. and Pincus, P. A. and Safinya, C. R. and Kim, M. W.},
title = {Direct Observation of Biaxial Confinement of a Semiflexible Filament in a Channel},
journal = {Macromolecules},
volume = {38},
number = {23},
pages = {9882-9884},
year = {2005},
doi = {10.1021/ma051348n},
URL = {https://doi.org/10.1021/ma051348n},
eprint = {https://doi.org/10.1021/ma051348n}
}

@article{Doerfler2019,
    AUTHOR = {D\"{o}rfler, Willy and N\"{u}rnberg, Robert},
     TITLE = {Discrete gradient flows for general curvature energies},
   JOURNAL = {SIAM J. Sci. Comput.},
  FJOURNAL = {SIAM Journal on Scientific Computing},
    VOLUME = {41},
      YEAR = {2019},
    NUMBER = {3},
     PAGES = {A2012--A2036},
      ISSN = {1064-8275},
   MRCLASS = {65M60 (35K55 53C44 65M12 74E10 78A15)},
  MRNUMBER = {3968254},
MRREVIEWER = {Hidekazu Yoshioka},
       DOI = {10.1137/18M122844X},
       URL = {https://doi.org/10.1137/18M122844X},
}

@article{Deckelnick2009,
    AUTHOR = {Deckelnick, Klaus and Dziuk, Gerhard},
     TITLE = {Error analysis for the elastic flow of parametrized curves},
   JOURNAL = {Math. Comp.},
  FJOURNAL = {Mathematics of Computation},
    VOLUME = {78},
      YEAR = {2009},
    NUMBER = {266},
     PAGES = {645--671},
      ISSN = {0025-5718},
   MRCLASS = {53C44 (35K55 58E10 65M15 65M60)},
  MRNUMBER = {2476555},
MRREVIEWER = {Nicolas Forcadel},
       DOI = {10.1090/S0025-5718-08-02176-5},
       URL = {https://doi.org/10.1090/S0025-5718-08-02176-5},
}

@article{Deckelnick2005,
    AUTHOR = {Deckelnick, Klaus and Dziuk, Gerhard and Elliott, Charles M.},
     TITLE = {Computation of geometric partial differential equations and
              mean curvature flow},
   JOURNAL = {Acta Numer.},
  FJOURNAL = {Acta Numerica},
    VOLUME = {14},
      YEAR = {2005},
     PAGES = {139--232},
      ISSN = {0962-4929},
   MRCLASS = {65M99 (35K05 35K55 53C44)},
  MRNUMBER = {2168343},
MRREVIEWER = {J. W. Jerome},
       DOI = {10.1017/S0962492904000224},
       URL = {https://doi.org/10.1017/S0962492904000224},
}

@article{Barrett2008,
    AUTHOR = {Barrett, John W. and Garcke, Harald and N\"{u}rnberg, Robert},
     TITLE = {Numerical approximation of anisotropic geometric evolution
              equations in the plane},
   JOURNAL = {IMA J. Numer. Anal.},
  FJOURNAL = {IMA Journal of Numerical Analysis},
    VOLUME = {28},
      YEAR = {2008},
    NUMBER = {2},
     PAGES = {292--330},
      ISSN = {0272-4979},
   MRCLASS = {65M60 (35K55 53C44 74H15 82D25)},
  MRNUMBER = {2401200},
       DOI = {10.1093/imanum/drm013},
       URL = {https://doi.org/10.1093/imanum/drm013},
}

@article{Barrett2010,
    AUTHOR = {Barrett, John W. and Garcke, Harald and N\"{u}rnberg, Robert},
     TITLE = {Numerical approximation of gradient flows for closed curves in
              {$\R^d$}},
   JOURNAL = {IMA J. Numer. Anal.},
  FJOURNAL = {IMA Journal of Numerical Analysis},
    VOLUME = {30},
      YEAR = {2010},
    NUMBER = {1},
     PAGES = {4--60},
      ISSN = {0272-4979},
   MRCLASS = {65N30 (53C44 65D18)},
  MRNUMBER = {2580546},
MRREVIEWER = {J. W. Jerome},
       DOI = {10.1093/imanum/drp005},
       URL = {https://doi.org/10.1093/imanum/drp005},
}

@article{Barrett2011,
    AUTHOR = {Barrett, John W. and Garcke, Harald and N\"{u}rnberg, Robert},
     TITLE = {The approximation of planar curve evolutions by stable fully
              implicit finite element schemes that equidistribute},
   JOURNAL = {Numer. Methods Partial Differential Equations},
  FJOURNAL = {Numerical Methods for Partial Differential Equations. An
              International Journal},
    VOLUME = {27},
      YEAR = {2011},
    NUMBER = {1},
     PAGES = {1--30},
      ISSN = {0749-159X},
   MRCLASS = {65M60},
  MRNUMBER = {2743598},
       DOI = {10.1002/num.20637},
       URL = {https://doi.org/10.1002/num.20637},
}

@article{Barrett2019,
    AUTHOR = {Barrett, John W. and Garcke, Harald and N\"{u}rnberg, Robert},
     TITLE = {Stable discretizations of elastic flow in {R}iemannian
              manifolds},
   JOURNAL = {SIAM J. Numer. Anal.},
  FJOURNAL = {SIAM Journal on Numerical Analysis},
    VOLUME = {57},
      YEAR = {2019},
    NUMBER = {4},
     PAGES = {1987--2018},
      ISSN = {0036-1429},
   MRCLASS = {65M60 (35K55 53A30 53C44 65M12)},
  MRNUMBER = {3992503},
       DOI = {10.1137/18M1227111},
       URL = {https://doi.org/10.1137/18M1227111},
}

@article{Bartels2013,
    AUTHOR = {Bartels, S\"{o}ren},
     TITLE = {A simple scheme for the approximation of the elastic flow of
              inextensible curves},
   JOURNAL = {IMA J. Numer. Anal.},
  FJOURNAL = {IMA Journal of Numerical Analysis},
    VOLUME = {33},
      YEAR = {2013},
    NUMBER = {4},
     PAGES = {1115--1125},
      ISSN = {0272-4979},
   MRCLASS = {65M06 (65D18)},
  MRNUMBER = {3119710},
MRREVIEWER = {Ranjan Kumar Mohanty},
       DOI = {10.1093/imanum/drs041},
       URL = {https://doi.org/10.1093/imanum/drs041},
}

@article{Oelz2011,
    AUTHOR = {\"{O}elz, D. B.},
     TITLE = {On the curve straightening flow of inextensible, open, planar
              curves},
   JOURNAL = {SeMA J.},
  FJOURNAL = {SeMA Journal},
    NUMBER = {54},
      YEAR = {2011},
     PAGES = {5--24},
      ISSN = {1575-9822},
   MRCLASS = {53C44},
  MRNUMBER = {2839294},
MRREVIEWER = {Anders Linn\'{e}r},
       DOI = {10.1007/bf03322585},
       URL = {https://doi.org/10.1007/bf03322585},
}

@article{Mora2003,
    AUTHOR = {Mora, Maria Giovanna and M\"{u}ller, Stefan},
     TITLE = {Derivation of the nonlinear bending-torsion theory for
              inextensible rods by {$\Gamma$}-convergence},
   JOURNAL = {Calc. Var. Partial Differential Equations},
  FJOURNAL = {Calculus of Variations and Partial Differential Equations},
    VOLUME = {18},
      YEAR = {2003},
    NUMBER = {3},
     PAGES = {287--305},
      ISSN = {0944-2669},
   MRCLASS = {74G65 (35B27 49J45 74B20 74K10)},
  MRNUMBER = {2018669},
MRREVIEWER = {Marc Oliver Rieger},
       DOI = {10.1007/s00526-003-0204-2},
       URL = {https://doi.org/10.1007/s00526-003-0204-2},
}

@article{Dziuk2002,
    AUTHOR = {Dziuk, Gerhard and Kuwert, Ernst and Sch\"{a}tzle, Reiner},
     TITLE = {Evolution of elastic curves in {$\Bbb R^n$}: existence and
              computation},
   JOURNAL = {SIAM J. Math. Anal.},
  FJOURNAL = {SIAM Journal on Mathematical Analysis},
    VOLUME = {33},
      YEAR = {2002},
    NUMBER = {5},
     PAGES = {1228--1245},
      ISSN = {0036-1410},
   MRCLASS = {53C44 (35K55 65M60)},
  MRNUMBER = {1897710},
MRREVIEWER = {Anders Linn\'{e}r},
       DOI = {10.1137/S0036141001383709},
       URL = {https://doi.org/10.1137/S0036141001383709},
}

@article{Dill1992,
    AUTHOR = {Dill, Ellis Harold},
     TITLE = {Kirchhoff's theory of rods},
   JOURNAL = {Arch. Hist. Exact Sci.},
  FJOURNAL = {Archive for History of Exact Sciences},
    VOLUME = {44},
      YEAR = {1992},
    NUMBER = {1},
     PAGES = {1--23},
      ISSN = {0003-9519},
   MRCLASS = {73-03 (01A55 73G05 73K05)},
  MRNUMBER = {1166007},
MRREVIEWER = {D. Ie\c{s}an},
       DOI = {10.1007/BF00379680},
       URL = {https://doi.org/10.1007/BF00379680},
}

@article{Langer1985,
    AUTHOR = {Langer, Joel and Singer, David A.},
     TITLE = {Curve straightening and a minimax argument for closed elastic
              curves},
   JOURNAL = {Topology},
  FJOURNAL = {Topology. An International Journal of Mathematics},
    VOLUME = {24},
      YEAR = {1985},
    NUMBER = {1},
     PAGES = {75--88},
      ISSN = {0040-9383},
   MRCLASS = {58E05 (53A04 58F17)},
  MRNUMBER = {790677},
MRREVIEWER = {A. J. Tromba},
       DOI = {10.1016/0040-9383(85)90046-1},
       URL = {https://doi.org/10.1016/0040-9383(85)90046-1},
}

@article{Langer1984,
    AUTHOR = {Langer, Joel and Singer, David A.},
     TITLE = {Knotted elastic curves in {$\R^3$}},
   JOURNAL = {J. London Math. Soc. (2)},
  FJOURNAL = {Journal of the London Mathematical Society. Second Series},
    VOLUME = {30},
      YEAR = {1984},
    NUMBER = {3},
     PAGES = {512--520},
      ISSN = {0024-6107},
   MRCLASS = {53A04 (49F99)},
  MRNUMBER = {810960},
MRREVIEWER = {R. Osserman},
       DOI = {10.1112/jlms/s2-30.3.512},
       URL = {https://doi.org/10.1112/jlms/s2-30.3.512},
}

@article{Furrer2000,
title = {{DNA} Rings with Multiple Energy Minima},
journal = {Biophysical Journal},
volume = {79},
number = {1},
pages = {116-136},
year = {2000},
issn = {0006-3495},
doi = {https://doi.org/10.1016/S0006-3495(00)76277-1},
url = {https://www.sciencedirect.com/science/article/pii/S0006349500762771},
author = {Patrick B. Furrer and Robert S. Manning and John H. Maddocks},
}

@article{Balaeff2006,
  title = {Modeling DNA loops using the theory of elasticity},
  author = {Balaeff, Alexander and Mahadevan, L. and Schulten, Klaus},
  journal = {Phys. Rev. E},
  volume = {73},
  issue = {3},
  pages = {031919},
  numpages = {23},
  year = {2006},
  month = {Mar},
  publisher = {American Physical Society},
  doi = {10.1103/PhysRevE.73.031919},
  url = {https://link.aps.org/doi/10.1103/PhysRevE.73.031919}
}

@article{Kirchhoff1859,
    AUTHOR = {Kirchhoff, G.},
     TITLE = {Ueber das {G}leichgewicht und die {B}ewegung eines unendlich
              d\"{u}nnen elastischen {S}tabes},
   JOURNAL = {J. Reine Angew. Math.},
  FJOURNAL = {Journal f\"{u}r die Reine und Angewandte Mathematik. [Crelle's
              Journal]},
    VOLUME = {56},
      YEAR = {1859},
     PAGES = {285--313},
      ISSN = {0075-4102},
   MRCLASS = {DML},
  MRNUMBER = {1579104},
       DOI = {10.1515/crll.1859.56.285},
       URL = {https://doi.org/10.1515/crll.1859.56.285},
}

@article{Shi1994,
author = {Shi,Yaoming  and Hearst,John E. },
title = {The {K}irchhoff elastic rod, the nonlinear {S}chrödinger equation, and {DNA} supercoiling},
journal = {The Journal of Chemical Physics},
volume = {101},
number = {6},
pages = {5186-5200},
year = {1994},
doi = {10.1063/1.468506},
URL = {https://doi.org/10.1063/1.468506},
eprint = {https://doi.org/10.1063/1.468506}
}

@article{Green1967,
    AUTHOR = {Green, A. E. and Laws, N. and Naghdi, P. M.},
     TITLE = {A linear theory of straight elastic rods},
   JOURNAL = {Arch. Rational Mech. Anal.},
  FJOURNAL = {Archive for Rational Mechanics and Analysis},
    VOLUME = {25},
      YEAR = {1967},
    NUMBER = {4},
     PAGES = {285--298},
      ISSN = {0003-9527},
   MRCLASS = {DML},
  MRNUMBER = {1553493},
       DOI = {10.1007/BF00250931},
       URL = {https://doi.org/10.1007/BF00250931},
}

@article{DallAcqua2014,
    AUTHOR = {Dall'Acqua, Anna and Lin, Chun-Chi and Pozzi, Paola},
     TITLE = {Evolution of open elastic curves in {$\Bbb{R}^n$} subject to
              fixed length and natural boundary conditions},
   JOURNAL = {Analysis (Berlin)},
  FJOURNAL = {Analysis. International Mathematical Journal of Analysis and
              its Applications},
    VOLUME = {34},
      YEAR = {2014},
    NUMBER = {2},
     PAGES = {209--222},
      ISSN = {0174-4747},
   MRCLASS = {35K55 (53C44)},
  MRNUMBER = {3213535},
MRREVIEWER = {Rongpei Huang},
       DOI = {10.1515/anly-2014-1249},
       URL = {https://doi.org/10.1515/anly-2014-1249},
}

@article{Manhart2015,
    AUTHOR = {Manhart, Angelika and Oelz, Dietmar and Schmeiser, Christian
              and Sfakianakis, Nikolaos},
     TITLE = {An extended filament based lamellipodium model produces
              various moving cell shapes in the presence of chemotactic
              signals},
   JOURNAL = {J. Theoret. Biol.},
  FJOURNAL = {Journal of Theoretical Biology},
    VOLUME = {382},
      YEAR = {2015},
     PAGES = {244--258},
      ISSN = {0022-5193},
   MRCLASS = {92C17},
  MRNUMBER = {3385931},
       DOI = {10.1016/j.jtbi.2015.06.044},
       URL = {https://doi.org/10.1016/j.jtbi.2015.06.044},
}

@article{Grothaus2016,
    AUTHOR = {Grothaus, Martin and Marheineke, Nicole},
     TITLE = {On a nonlinear partial differential algebraic system arising
              in the technical textile industry: analysis and numerics},
   JOURNAL = {IMA J. Numer. Anal.},
  FJOURNAL = {IMA Journal of Numerical Analysis},
    VOLUME = {36},
      YEAR = {2016},
    NUMBER = {4},
     PAGES = {1783--1803},
      ISSN = {0272-4979},
   MRCLASS = {65M60 (35Q74 65M12 74K10)},
  MRNUMBER = {3556404},
MRREVIEWER = {Tom\'{a}\v{s} Ligursk\'{y}},
       DOI = {10.1093/imanum/drv056},
       URL = {https://doi.org/10.1093/imanum/drv056},
}

@article{Bartels2018,
    AUTHOR = {Bartels, S\"{o}ren and Reiter, Philipp and Riege, Johannes},
     TITLE = {A simple scheme for the approximation of self-avoiding
              inextensible curves},
   JOURNAL = {IMA J. Numer. Anal.},
  FJOURNAL = {IMA Journal of Numerical Analysis},
    VOLUME = {38},
      YEAR = {2018},
    NUMBER = {2},
     PAGES = {543--565},
      ISSN = {0272-4979},
   MRCLASS = {65N30 (74G65)},
  MRNUMBER = {3800032},
MRREVIEWER = {J\'{a}nos Kar\'{a}tson},
       DOI = {10.1093/imanum/drx021},
       URL = {https://doi.org/10.1093/imanum/drx021},
}

@article{Brunnett1994,
    AUTHOR = {Brunnett, Guido and Crouch, Peter E.},
     TITLE = {Elastic curves on the sphere},
   JOURNAL = {Adv. Comput. Math.},
  FJOURNAL = {Advances in Computational Mathematics},
    VOLUME = {2},
      YEAR = {1994},
    NUMBER = {1},
     PAGES = {23--40},
      ISSN = {1019-7168},
   MRCLASS = {58E10 (34A45 65L05)},
  MRNUMBER = {1266022},
MRREVIEWER = {Maria Letizia Bertotti},
       DOI = {10.1007/BF02519034},
       URL = {https://doi.org/10.1007/BF02519034},
}

@article{Wojtowytsch2018,
    AUTHOR = {Wojtowytsch, Stephan},
     TITLE = {Confined elasticae and the buckling of cylindrical shells},
   JOURNAL = {Adv. Calc. Var.},
  FJOURNAL = {Advances in Calculus of Variations},
    VOLUME = {14},
      YEAR = {2021},
    NUMBER = {4},
     PAGES = {555--587},
      ISSN = {1864-8258},
   MRCLASS = {74K25 (49J40 49K30 53A04 74P20)},
  MRNUMBER = {4319044},
       DOI = {10.1515/acv-2019-0033},
       URL = {https://doi.org/10.1515/acv-2019-0033},
}

@article{BarPal20-pre,
    author = {Bartels, Sören and Palus, Christian},
    title = "{Stable gradient flow discretizations for simulating bilayer plate bending with isometry and obstacle constraints}",
    journal = {IMA Journal of Numerical Analysis},
    year = {2021},
    month = {07},
    issn = {0272-4979},
    doi = {10.1093/imanum/drab050},
    url = {https://doi.org/10.1093/imanum/drab050},
    note = {drab050},
    eprint = {https://academic.oup.com/imajna/advance-article-pdf/doi/10.1093/imanum/drab050/38894644/drab050.pdf},
}

@article {Horn21,
    AUTHOR = {Hornung, Peter},
     TITLE = {Deformation of framed curves with boundary conditions},
   JOURNAL = {Calc. Var. Partial Differential Equations},
  FJOURNAL = {Calculus of Variations and Partial Differential Equations},
    VOLUME = {60},
      YEAR = {2021},
    NUMBER = {3},
     PAGES = {Paper No. 87, 26},
      ISSN = {0944-2669},
   MRCLASS = {49J05 (34H05 49J45 49N60 58C15)},
  MRNUMBER = {4248560},
       DOI = {10.1007/s00526-021-01980-0},
       URL = {https://doi.org/10.1007/s00526-021-01980-0},
}

@article {BaFaWe20,
   author = {Bartels, S{\"o}ren and Falk, Philipp and Weyer, Pascal},
   title = {\textsc{Knotevolve} -- a tool for relaxing knots and inextensible curves},
   year = {2020},
   note = {\url{https://aam.uni-freiburg.de/knotevolve/}},
}

@article {BarRei21,
    AUTHOR = {Bartels, S\"{o}ren and Reiter, Philipp},
     TITLE = {Stability of a simple scheme for the approximation of elastic
              knots and self-avoiding inextensible curves},
   JOURNAL = {Math. Comp.},
  FJOURNAL = {Mathematics of Computation},
    VOLUME = {90},
      YEAR = {2021},
    NUMBER = {330},
     PAGES = {1499--1526},
      ISSN = {0025-5718},
   MRCLASS = {65N30 (53A04 65N12)},
  MRNUMBER = {4273107},
       DOI = {10.1090/mcom/3633},
       URL = {https://doi.org/10.1090/mcom/3633},
}

@article {BarRei20,
    AUTHOR = {Bartels, S\"{o}ren and Reiter, Philipp},
     TITLE = {Numerical solution of a bending-torsion model for elastic
              rods},
   JOURNAL = {Numer. Math.},
  FJOURNAL = {Numerische Mathematik},
    VOLUME = {146},
      YEAR = {2020},
    NUMBER = {4},
     PAGES = {661--697},
      ISSN = {0029-599X},
   MRCLASS = {65N30 (65N12 65N15 74K10)},
  MRNUMBER = {4182084},
MRREVIEWER = {Piotr Krzy\.{z}anowski},
       DOI = {10.1007/s00211-020-01156-6},
       URL = {https://doi.org/10.1007/s00211-020-01156-6},
}

@book {Leon17-book,
    AUTHOR = {Leoni, Giovanni},
     TITLE = {A first course in {S}obolev spaces},
    SERIES = {Graduate Studies in Mathematics},
    VOLUME = {181},
   EDITION = {Second},
 PUBLISHER = {American Mathematical Society, Providence, RI},
      YEAR = {2017},
     PAGES = {xxii+734},
      ISBN = {978-1-4704-2921-8},
   MRCLASS = {46E35 (26Axx 26B30 28A78 46-01)},
  MRNUMBER = {3726909},
       DOI = {10.1090/gsm/181},
       URL = {https://doi.org/10.1090/gsm/181},
}

@article {Walker16,
    AUTHOR = {Walker, Shawn W.},
     TITLE = {Shape optimization of self-avoiding curves},
   JOURNAL = {J. Comput. Phys.},
  FJOURNAL = {Journal of Computational Physics},
    VOLUME = {311},
      YEAR = {2016},
     PAGES = {275--298},
      ISSN = {0021-9991},
   MRCLASS = {82D60 (65N30)},
  MRNUMBER = {3461787},
       DOI = {10.1016/j.jcp.2016.02.011},
       URL = {https://doi.org/10.1016/j.jcp.2016.02.011},
}

@article {Reiter21,
    AUTHOR = {Reiter, Philipp and Schumacher, Henrik},
     TITLE = {Sobolev gradients for the {M}\"{o}bius energy},
   JOURNAL = {Arch. Ration. Mech. Anal.},
  FJOURNAL = {Archive for Rational Mechanics and Analysis},
    VOLUME = {242},
      YEAR = {2021},
    NUMBER = {2},
     PAGES = {701--746},
      ISSN = {0003-9527},
   MRCLASS = {58E10},
  MRNUMBER = {4331015},
       DOI = {10.1007/s00205-021-01680-1},
       URL = {https://doi.org/10.1007/s00205-021-01680-1},
}

@article {Bonito22,
    AUTHOR = {Bonito, Andrea and Guignard, Diane and Nochetto, Ricardo H.
              and Yang, Shuo},
     TITLE = {L{DG} approximation of large deformations of prestrained
              plates},
   JOURNAL = {J. Comput. Phys.},
  FJOURNAL = {Journal of Computational Physics},
    VOLUME = {448},
      YEAR = {2022},
     PAGES = {Paper No. 110719, 27},
      ISSN = {0021-9991},
   MRCLASS = {65N30 (74K20)},
  MRNUMBER = {4321636},
       DOI = {10.1016/j.jcp.2021.110719},
       URL = {https://doi.org/10.1016/j.jcp.2021.110719},
}

@article{Reis16,
    author = {Mulcahy, Connor G. and Su, Tianxiang and Wicks, Nathan and Reis, Pedro M.},
    title = "{Extending the Reach of a Rod Injected Into a Cylinder Through Axial Rotation}",
    journal = {Journal of Applied Mechanics},
    volume = {83},
    number = {5},
    year = {2016},
    month = {02},
    abstract = "{We investigate continuous axial rotation as a mechanism for extending the reach of an elastic rod injected into a horizontal cylindrical constraint, prior to the onset of helical buckling. Our approach focuses on the development of precision desktop experiments to allow for a systematic investigation of three parameters that affect helical buckling: rod rotation speed, rod injection speed, and cylindrical constraint diameter. Within the parameter region explored, we found that the presence of axial rotation increases horizontal reach by as much as a factor of 5, when compared to the nonrotating case. In addition, we develop an experimentally validated theory that takes into account anisotropic friction and torsional effects. Our theoretical predictions are found to be in good agreement with experiments, and our results demonstrate the benefits of using axial rotation for extending reach of a rod injected into a constraining pipe.}",
    issn = {0021-8936},
    doi = {10.1115/1.4032500},
    url = {https://doi.org/10.1115/1.4032500},
    note = {051003},
    eprint = {https://asmedigitalcollection.asme.org/appliedmechanics/article-pdf/83/5/051003/6082462/jam\_083\_05\_051003.pdf},
}

@article {BGN_JCP_19,
    AUTHOR = {Barrett, John W. and Garcke, Harald and N\"{u}rnberg, Robert},
     TITLE = {Finite element methods for fourth order axisymmetric geometric
              evolution equations},
   JOURNAL = {J. Comput. Phys.},
  FJOURNAL = {Journal of Computational Physics},
    VOLUME = {376},
      YEAR = {2019},
     PAGES = {733--766},
      ISSN = {0021-9991},
   MRCLASS = {65M60 (35Q35)},
  MRNUMBER = {3875543},
MRREVIEWER = {M. D. Marcozzi},
       DOI = {10.1016/j.jcp.2018.10.006},
       URL = {https://doi.org/10.1016/j.jcp.2018.10.006},
}

\end{document}